\documentclass[12pt,reqno,a4paper]{amsart}
\usepackage{hyperref}
\usepackage{enumerate}
\usepackage[left=0.7in, right=0.7in, top= 0.83in, bottom=1.63in]{geometry}
\usepackage{amsmath,amssymb,amsthm,amsfonts}
\usepackage[dvipsnames]{xcolor}
\usepackage{tikz}

\usepackage{nicematrix}
\usepackage{caption}
\usetikzlibrary{matrix, decorations.pathreplacing, calc, fit}
\usepackage{float} 

\usetikzlibrary{positioning}

	\usepackage{verbatim}
\usetikzlibrary{arrows,shapes}
\usetikzlibrary{positioning, 
	quotes}

\usepackage{graphicx}
\usetikzlibrary{graphs, graphs.standard}

\tikzset{
	modal/.style={>=stealth’,shorten >=1pt,shorten <=1pt,auto,node distance=1.5cm,
		semithick},
	world/.style={circle, draw,minimum size=.1cm,fill=gray!15},
	point/.style={circle,draw,inner sep=0.3mm,fill=black},
	circ/.style={circle,draw,inner sep=0.1mm,fill=white},
	reflexive above/.style={->,loop,looseness=7,in=120,out=60},
	reflexive below/.style={->,loop,looseness=7,in=240,out=300},
	reflexive left/.style={->,loop,looseness=7,in=150,out=210},
	reflexive right/.style={->,loop,looseness=7,in=30,out=330}
}
\pgfdeclarelayer{background}
		\pgfsetlayers{background,main}
		
		\tikzstyle{vertex}=[circle,fill=black!25,minimum size=14pt,inner sep=0pt]
		\tikzstyle{selected vertex} = [vertex, fill=red!24]
		\tikzstyle{edge} = [draw,thick,-]
		\tikzstyle{weight} = [font=\small]
		\tikzstyle{selected edge} = [draw,line width=5pt,-,red!50]
		\tikzstyle{ignored edge} = [draw,line width=5pt,-,black!20]
\usetikzlibrary{shapes}
\usetikzlibrary{plotmarks}
\usetikzlibrary{arrows}
\usetikzlibrary{positioning}
\theoremstyle{definition}
\newtheorem{defn}{Definition}[section]

\theoremstyle{plain}
\newtheorem{thm}[defn]{Theorem}
\newtheorem{lem}[defn]{Lemma}
\newtheorem{prop}[defn]{Proposition}

\newtheorem{conjecture}[defn]{Conjecture}
\newtheorem{fact}[defn]{Fact}
\newtheorem{claim}[defn]{Claim}

\newtheorem{question}[defn]{Question}

\title[AVD-Total Coloring]
{AVD-total colorings of Subdivision Graphs, Joins, and Deleted Lexicographic Products}

\author{Amitayu Banerjee}
\address{E\"otv\"os Lor\'and University, Budapest, Hungary}
\email{banerjee.amitayu@gmail.com}

\date{}

\makeatletter
\@namedef{subjclassname@2020}{\textup{2020} Mathematics Subject Classification}
\makeatother
\subjclass[2020]{Primary 05C15; Secondary 05C76.}
\keywords{Adjacent vertex distinguishing total coloring, Latin squares, join graphs, deleted lexicographic product, central graph, subdivision graph}
\begin{document}

\begin{abstract}
An {\em adjacent vertex distinguishing (AVD) total coloring} of a graph $G=(V(G),E(G))$ is a proper total coloring $f$ such that for every edge $uv\in E(G)$, we have
$
C_G(u)\neq C_G(v),
$
where $C_G(u)=\{f(u)\}\cup \{f(uv): uv\in E(G)\}$. 
The 
{\em AVD-total chromatic number}
 $\chi''_{a}(G)$ of $G$ is the least integer $k$ such that there exists an AVD-total coloring of $G$ using $k$ colors.
The {\em AVD-total coloring conjecture (AVD-TCC)} asserts that $\chi''_{a}(G)\leq \Delta(G)+3$ for every simple graph $G$, where $\Delta(G)$ is the maximum degree of $G$.
The {\em central graph} of $G$ is obtained from the subdivision graph of $G$ by joining all pairs of non-adjacent vertices of $G$. 
In this paper, we completely determine the AVD-total chromatic number of subdivision graphs of connected graphs and verify the AVD-TCC for certain classes of joins of graphs, deleted lexicographic products, and central graphs. These results provide partial progress towards an open problem posed by Panda, Verma, and Keerti. 
\end{abstract}

\maketitle
\section{Introduction and Definitions}

All graphs considered in this paper are finite, undirected, and simple.
The sets $V(G)$ and $E(G)$ are the vertex and edge sets of a graph $G$.
For $v\in V(G)$, let $deg_G(v)$ denote the degree of $v$, and let $\Delta(G)$ denote the maximum degree of $G$.
For $X\subseteq V(G)$, let $G[X]$ denote the subgraph of $G$ induced by $X$.
A \emph{proper total coloring} of  $G$ is a coloring of $V(G)\cup E(G)$  such that no two adjacent or incident elements receive the same color.
The minimum number of colors required for a proper total coloring of $G$ is called the \emph{total chromatic number} of $G$, denoted by $\chi''(G)$.
Behzad \cite{Beh1965} and Vizing \cite{Viz1968} independently posed the following conjecture.

\begin{conjecture}[Total Coloring Conjecture (TCC)]\label{Conjecture 1.1}
For any graph $G$, $\chi''(G) \leq \Delta(G) + 2$.
\end{conjecture}

The {\em adjacent vertex distinguishing (AVD) total chromatic number}
of $G$, denoted by $\chi''_{a}(G)$, is the least integer $k$ such that $G$ admits a proper total coloring $f$ with $k$ colors that satisfy $C_G(u)\neq C_G(v)$ for every edge $uv\in E(G)$, where
$C_G(u)=\{f(u)\}\cup\{f(uw):uw\in E(G)\}$.
Zhang et al.~\cite{ZCYLW2005} introduced adjacent vertex distinguishing total colorings and posed the following conjecture.

\begin{conjecture}\label{Conjecture 1.2}(AVD-TCC). For any graph $G$, $\chi''_{a}(G) \leq \Delta(G) +3$.
\end{conjecture}

We say that a graph $G$ is of {\em AVD-Type $i$} if $\chi''_{a}(G) = \Delta(G) + i$ for $i\in\{1,2,3\}$ and of {\em Type $j$} if $\chi''(G) = \Delta(G) + j$ for $j\in\{1,2\}$. 
The Conjecture~\ref{Conjecture 1.2} has been verified for graphs with $\Delta(G)\in\{3,4\}$~\cite{Wang2007,Che2008,LLLM2017}, 4-regular graphs~\cite{PR2014}, hypercubes~\cite{CG2009}, complete equipartite graphs~\cite{LCM2015}, indifference graphs~\cite{PM2010}, split graphs~\cite{VFP2022}, and several other graph classes.

\begin{defn}\label{Definition 1.3}
The {\em chromatic index} of a graph $G$, denoted by $\chi'(G)$, is the least integer $k$ such that $G$ admits a proper edge coloring with $k$ colors.
If $\chi'(G)=\Delta(G)$, then $G$ is called a class-I graph.  
The {\em adjacent vertex distinguishing (AVD) chromatic index} of $G$, denoted by $\chi'_{a}(G)$, is the least integer $k$ such that $G$ admits a proper edge coloring $f$ with $k$ colors that satisfy $D_G(u)\neq D_G(v)$ for every edge $uv\in E(G)$, where
$D_G(u)=\{f(uw):uw\in E(G)\}$.
Let $P_n$, $C_n$, and $K_n$ denote the path, cycle, and complete graph on $n$ vertices, respectively, and $K_{n,m}$ be a complete bipartite graph with bipartitions of sizes $n$ and $m$. 
\end{defn}

Zhang et al. \cite{ZLW2002} posed  the following conjecture.

\begin{conjecture}\label{Conjecture 1.4}
If $G$ is a connected graph of order $n\geq 3$ and
$G\not\cong C_5$, then $\chi'_{a}(G) \leq \Delta(G)+2$.
\end{conjecture}

\begin{defn}\label{Definition 1.5}
The \emph{subdivision graph} $S(G)$ of $G$ is obtained from $G$ by replacing each edge $xy\in E(G)$ with the two edges $xw_{xy}$ and $w_{xy}y$, where $w_{xy}$ is a new vertex (called the subdivision vertex corresponding to the edge $xy$).
The {\em central graph $C(G)$} of $G$ is obtained from $S(G)$ by adding all edges of $\overline{G}$, where $\overline{G}$ is the complement of $G$.
\end{defn}

The central graph is introduced by Vernold~\cite{Ver2007}. These graphs have been well studied; see, for example, \cite{AG2016, AM2022, BFKP2026, CSW2020, Eff2017, KK2021, KM2019, PVK2020, SM2017}.

\begin{defn}\label{Definition 1.6}
The {\em join} of two graphs $G$ and $H$, denoted by $G\vee H$, is obtained by taking $G$ and $H$ and adding
edges between every vertex of $G$ and every vertex of $H$.
The {\em deleted lexicographic product} of $G$ and $H$, 
denoted by $D_{lex}(G, H)$, is a graph with the vertex set 
$V(G) \times V(H)$ and the edge set 
$
E = \left\{ ((g_1, h_1), (g_2, h_2)) : (g_1, g_2) \in E(G) \text{ and } h_1 \neq h_2, 
\text{ or } (h_1, h_2) \in E(H) \text{ and } g_1 = g_2 \right\}.
$
\end{defn}

\subsection{Motivation} 
Kavaskar and Sukumaran \cite{KS2024} studied the total colorings of the joins of graphs and proved that if $G$ satisfies the TCC then the join $G\vee G$ satisfies the TCC and the join of two Type 1 graphs having the same order satisfies the TCC.
The deleted lexicographic product was introduced by Miklavi\v{c} and Milani\v{c} \cite{MM2011} and further studied by Frelih and Miklavi\v{c} \cite{FM2013}.
The total chromatic number for certain classes of these products was studied in \cite{SGS2024, SGS2022, VGS2018}.
In particular, Vignesh, Geetha, and Somasundaram \cite{VGS2018} proved that for any class-I graph $G$ and any graph $H$ with at least $3$ vertices, $D_{lex}(G, H)$ satisfies the TCC.
Motivated by these results, we study AVD-total colorings of joins of graphs and
the deleted lexicographic product of graphs.
Sudha and Manikandan~\cite{SM2017} studied the total chromatic number of the central graphs of stars, paths, and cycles.
Panda, Verma, and Keerti~\cite{PVK2020} investigated the AVD-total chromatic number of the central graphs of complete graphs, stars, paths, and cycles
and posed the following open question.

\begin{question}\label{Question 1.7}
(Panda et al. \cite{PVK2020}) Does AVD-TCC hold for the central graph of any graph?
\end{question}

The authors of \cite{PVK2020} also proposed investigating the classification of AVD-total colorings for central graphs. In this paper, we make partial progress in this direction.

\subsection{Main Results}

In Section 2, we study the classification problem for the AVD-total chromatic number of subdivision graphs of connected graphs. 

\begin{enumerate}
    \item (Proposition \ref{Proposition 2.5}, Theorem \ref{Theorem 2.6}) For every connected graph $G$, the subdivision graph $S(G)$ is of AVD-Type 1 if and only if 
    $\Delta(G)\geq 3$ or $G\cong P_2$, and is of AVD-Type 2 otherwise.
\end{enumerate}

In Section 3, we prove the AVD-TCC for certain classes of deleted lexicographic products.

\begin{enumerate}
    \item (Theorem~\ref{Theorem 3.4}) 
    If $G$ is a class-I graph, $H$ is a graph of order $n\geq 3$ such that $\chi'_{a}(H)\leq \Delta(H)+i$ for $i\in \{0,1,2\}$, then $\chi''_{a}(D_{lex}(G, H))\leq \Delta(D_{lex}(G, H)) +i+1$.
    Thus, if $H$ satisfies the
    Conjecture \ref{Conjecture 1.4} and $G$ is a class-I graph, then 
    $D_{lex}(G, H)$ satisfies the AVD-TCC.
    \item (Theorem~\ref{Theorem 3.6}) If $G$ is a graph of order $m$ such that $m\geq \Delta(G)+3$ and $\chi''_{a}(G)\leq \Delta(G)+2$, then $\chi''_{a}(D_{lex}(P_n, G))\leq \Delta(D_{lex}(P_n, G))+2$ for every $n\geq 3$. 
    
\end{enumerate} 

In Section 4, we apply Latin squares to study the AVD-total chromatic number of the join of graphs and the central graphs of regular graphs.

\begin{enumerate}
     \item (Theorem~\ref{Theorem 4.5}) Let $G_1$ and $G_2$ be graphs of orders $n$ and $m$, respectively, with $m>n+1$ and $\Delta(G_2)\leq \Delta(G_1)$. If $G_1$ satisfies the TCC then $G_1\vee G_2$ satisfies the AVD-TCC.

    \item (Theorem~\ref{Theorem 4.8}) If $G_1$ and $G_2$ are graphs of order $n$ such that $\Delta(G_2)\leq \Delta(G_1)$, and $G_1$ is a Type 1 graph, then $G_1\vee G_2$ satisfies the AVD-TCC. 

    \item (Theorem~\ref{Theorem 4.9}) If $G$ is a connected regular graph of order $n\geq 5$, then $C(G)$ is AVD-Type 2 when $n$ is even and $G\not\cong K_n$ and $G$ satisfies the AVD-TCC when $n$ is odd. 
\end{enumerate}

Panda et al.~\cite{PVK2020} proved that the TCC holds for the central graph of any graph. 
In Section~5, we provide an alternative proof of this result. In Proposition \ref{Proposition 5.4}, 
we prove the following results for graphs $G_1$ and $G_2$ of order at least 3:

\begin{enumerate}
    \item If $C(G_{1})$ and $C(G_{2})$ satisfy the AVD-TCC then $C(G_{1} \vee G_{2})$ satisfies the AVD-TCC.
    \item If $G_{1}$ and $G_{2}$ have the same order, then $C(G_{1} \vee G_{2})$ satisfies the AVD-TCC.
    \item If $G_{1}\in\{P_{n},C_{n},K_{n}\}$ and
    $G_{2}\in\{P_{m},C_{m},K_{m}\}$, then
    $C(G_{1}\vee G_{2})$ satisfies the AVD-TCC for any integers $m,n\geq3$.
\end{enumerate}

\section{Subdivision graphs}
The following result shows that every bipartite graph $G$ admits an AVD-total coloring using at most $\Delta(G)+2$ colors.

\begin{thm}[{\cite[Proposition 1.1]{Che2008}}]\label{Theorem 2.1}
If $G$ is a bipartite graph, then $\chi''_{a}(G)\leq \Delta(G) + 2$.
\end{thm}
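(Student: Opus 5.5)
The plan is to combine a proper edge colouring from König's theorem with a colouring of the vertices that uses colours absent at each vertex. Let $\Delta=\Delta(G)$ and let $(A,B)$ be a bipartition of $G$. By König's edge coloring theorem, $E(G)$ has a proper edge coloring $g$ with colors $\{1,\dots,\Delta\}$.

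\textbf{Step 1 (vertices in $B$).} Give every vertex of $B$ the new color $\Delta+2$. Since $B$ is independent and $\Delta+2$ is not used on edges, this is proper at every vertex of $B$.

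\textbf{Step 2 (vertices in $A$).} Each $u\in A$ should receive a color from $\{1,\dots,\Delta+1\}$ not used on its incident edges. Write $M(u)=\{1,\dots,\Delta+1\}\setminus g(\partial u)$ for the set of such colors; it is nonempty because $u$ has at most $\Delta$ incident edges. Two vertices of $A$ are never adjacent, so the only constraints on $u$ come from its incident edges. The choice of color inside $M(u)$ is made so as to separate $u$ from its neighbours, as follows.

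\textbf{Step 3 (distinguishing adjacent vertices).} Take an edge $uv$ with $u\in A$ and $v\in B$. Then $\Delta+2\in C(v)$, while every color of $C(u)$ lies in $\{1,\dots,\Delta+1\}$. Hence $C(u)\neq C(v)$ automatically, and the number of colors used is $\Delta+2$.

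The key trick is putting the extra color on one whole side of the bipartition, which makes the distinguishing condition free. There is essentially no obstacle. The only point needing care is that König's theorem gives exactly $\Delta$ edge colors, so that a free color in $\{1,\dots,\Delta+1\}$ exists at each vertex of $A$. If $G$ has isolated vertices or several components, the same argument applies to each component, and edgeless components are trivial.
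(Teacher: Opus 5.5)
Your proof is correct. König's theorem gives a proper edge coloring with $\Delta$ colors. Each $u\in A$ receives a color from $\{1,\dots,\Delta+1\}$ that is missing at $u$, and each $v\in B$ receives $\Delta+2$. This is a proper total coloring, and for every edge $uv$ the color $\Delta+2$ lies in $C(v)$ but never in $C(u)$, so adjacent vertices are distinguished with $\Delta+2$ colors.

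The paper does not prove this statement itself; it only quotes it from Chen's paper. Your argument is the standard one. It uses the same mechanism as the paper's proof of Theorem~\ref{Theorem 2.6}: a König edge coloring, plus one color reserved for an entire side of the bipartition. The difference is that in Theorem~\ref{Theorem 2.6} the subdivision side has degree $2$ and $\Delta\geq 3$. Those vertices can therefore avoid the reserved color using colors already in $\{1,\dots,\Delta\}$, which is what lowers the bound to $\Delta+1$ there.

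One cosmetic point: Step~2 suggests that the color inside $M(u)$ must be chosen carefully, but no choice matters. Giving every vertex of $A$ the single color $\Delta+1$ already works, as your Step~3 shows. The remark about handling components separately is also unnecessary, since the bipartition is global.
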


Since the subdivision graph $S(G)$ is bipartite, Theorem~\ref{Theorem 2.1} yields
$\chi''_{a}(S(G)) \leq \Delta(S(G)) + 2$.
In this section, we determine when $S(G)$ is AVD-Type~1.
We recall the following known results.

\begin{fact}[K\"{o}nig's Theorem]\label{Fact 2.2}
If $G$ is a bipartite graph, then $\chi'(G) = \Delta(G)$.
\end{fact}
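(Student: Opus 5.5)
The plan is to prove the two inequalities separately. The lower bound $\chi'(G)\geq \Delta(G)$ is immediate: the $\Delta(G)$ edges at a vertex of maximum degree are pairwise adjacent, so they need distinct colors. For the upper bound $\chi'(G)\le\Delta(G)$ I will use induction on $|E(G)|$ together with a Kempe-chain (alternating path) recoloring. The base case $E(G)=\emptyset$ is trivial. Write $\Delta=\Delta(G)$.

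For the inductive step, pick an edge $uv\in E(G)$. The graph $G-uv$ is bipartite and $\Delta(G-uv)\le \Delta$. By induction it has a proper edge coloring with colors from $\{1,\dots,\Delta\}$; if its maximum degree is smaller, this is still a coloring using at most $\Delta$ colors. In $G-uv$ both $u$ and $v$ have degree at most $\Delta-1$. Hence some color $\alpha$ is missing at $u$ and some color $\beta$ is missing at $v$. If a single color is missing at both vertices, give it to $uv$ and we are done. So assume $\alpha\ne\beta$, with $\alpha$ present at $v$ and $\beta$ present at $u$.

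Consider the maximal path $P$ that starts at $v$ and whose edges are alternately colored $\alpha,\beta,\alpha,\dots$, beginning with the $\alpha$-edge at $v$. Properness of the coloring makes $P$ well defined: each vertex has at most one edge of each color. It is a path and not a cycle, because $v$ has no $\beta$-edge, so the path can never return to $v$.

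The key claim, and the only delicate point, is that $u\notin P$. This is exactly where bipartiteness is used. Since $uv\in E(G)$, $u$ and $v$ lie in opposite parts, so every walk from $v$ to $u$ has odd length. If $P$ reached $u$, the subpath from $v$ to $u$ would have odd length. Because the colors alternate starting with $\alpha$, its last edge would then be colored $\alpha$. This is impossible, since $\alpha$ is missing at $u$.

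With $u\notin P$ established, swap the colors $\alpha$ and $\beta$ on the edges of $P$. The result is still a proper coloring. At interior vertices of $P$ the pair of colors $\{\alpha,\beta\}$ is merely exchanged. At the far endpoint of $P$, maximality guarantees that the newly introduced color was absent there. The coloring at $u$ is unchanged. After the swap $\alpha$ is missing at $v$, and it is still missing at $u$. Color $uv$ with $\alpha$; this gives a proper $\Delta$-edge-coloring of $G$ and completes the induction.
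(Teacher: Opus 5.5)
Your proof is correct. It is the standard alternating-path (Kempe chain) proof of K\"onig's edge-coloring theorem. Bipartiteness is used exactly where you place it: it forces the $v$--$u$ subpath of $P$ to have odd length, so that subpath would end in an $\alpha$-edge at $u$.

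Two routine checks are left implicit in your write-up:
\begin{itemize}
\item After the swap, the coloring is still proper at the starting vertex $v$, because $\beta$ was missing there.
\item The alternating walk cannot revisit an interior vertex, because such a vertex already has both its $\alpha$-edge and its $\beta$-edge on $P$.
\end{itemize}

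The paper gives no proof of this statement. It simply invokes K\"onig's theorem as a classical fact, and uses it only to edge-color $S(G)$ in Theorem~\ref{Theorem 2.6} and $K_{n,n}-M$ in Theorem~\ref{Theorem 3.4}. The result is also the special case $\chi'\le\chi'_L$ of Galvin's theorem, which the paper cites later as Fact~\ref{Fact 4.3}(1). However, Galvin's proof itself starts from a K\"onig coloring, so that is not an independent route. A genuinely different standard proof embeds $G$ in a $\Delta$-regular bipartite graph and repeatedly removes perfect matchings guaranteed by Hall's theorem.

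Citing is the economical choice for the paper. Your argument buys self-containment and is constructive: it gives a polynomial-time procedure for producing the $\Delta(G)$-edge-colorings that the paper's constructions start from.
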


\begin{fact}[{\cite[Theorem 2.1]{ZCYLW2005}}]\label{Fact 2.3}
If $n \geq 4$, then $\chi''_{a}(C_n) = 4$.
\end{fact}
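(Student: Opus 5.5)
The proof has two halves: the lower bound $\chi''_a(C_n)\ge 4$ comes from a counting observation, and the upper bound $\chi''_a(C_n)\le 4$ from an explicit cyclic pattern, split by the parity of $n$.

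\textbf{Lower bound.} Since $\Delta(C_n)=2$, any proper total coloring needs at least $3$ colors. Suppose a proper total coloring uses exactly $3$ colors. Each vertex $u$ has $f(u)$ and the colors of its two incident edges pairwise distinct. So $C_{C_n}(u)$ is the whole palette, and adjacent vertices have identical sets. Hence $\chi''_a(C_n)\ge 4$.

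\textbf{Reformulation for the upper bound.} Write the cycle as $v_0e_0v_1e_1\cdots v_{n-1}e_{n-1}$ with $e_i=v_iv_{i+1}$ (indices mod $n$). A total coloring is then a cyclic word $s_0s_1\cdots s_{2n-1}$ over $\{1,2,3,4\}$, with vertices at even positions and edges at odd positions. Properness means exactly that any three cyclically consecutive letters are distinct. With $4$ colors each $C(v_i)$ is a $3$-set, so it is determined by the missing color $m(v_i)$, namely the unique color absent from $s_{2i-1}s_{2i}s_{2i+1}$. The AVD condition becomes: $m(v_i)\neq m(v_{i+1})$ for all $i$.

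\textbf{Case $n$ even.} Take the periodic word $1234\,1234\cdots$, which has length $2n\equiv 0\pmod 4$. It closes up cyclically, and any three consecutive letters are distinct. A short check shows the missing colors at consecutive vertices alternate between $3$ and $1$. This alternation is consistent around the cycle exactly because $n$ is even.

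\textbf{Case $n$ odd.} This is the main obstacle, since now $2n\equiv 2\pmod 4$ and the period-$4$ word does not close. The natural alternatives fail: a period-$3$ word such as $123123\cdots$ makes every missing color equal to $4$. My plan is to build the word as a concatenation of several copies of the block $1234$ and one special block $B$ of length $6$ (or $10$), with $|B|\equiv 2 \pmod 4$. The block $B$ is chosen by hand so that three properties hold.
\begin{enumerate}[(i)]
\item Every window of three consecutive letters, including the windows across both junctions with $1234$-blocks and around the cyclic wrap, consists of distinct letters.
\item The missing colors of the vertices inside $B$ differ from their neighbours' missing colors.
\item At the two junctions, the missing colors on either side differ.
\end{enumerate}
Since $n\ge 5$ for odd $n$, one length-$10$ block alone handles $n=5$. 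Every larger odd $n$ is that block plus $(2n-10)/4$ copies of $1234$. So it suffices to exhibit and verify one explicit $10$-letter word for $C_5$ whose junction behaviour is compatible with $1234$. I would find this word by a short hand search over words starting with $12\ldots$. The only real work is this finite check; everything else is routine.
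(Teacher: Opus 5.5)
The paper does not prove this statement; it cites Zhang et al., so your argument has to stand on its own. Your lower bound is correct. Your reformulation is also correct: properness is equivalent to every cyclic $3$-window having distinct letters, and the AVD condition is equivalent to consecutive vertices having different missing colors. The even case is complete, since in $(1234)^{n/2}$ the missing colors at the vertices alternate $3,1,3,1,\dots$.

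\textbf{The gap is the odd case}, which is the only nontrivial part of the statement. You reduce it to the existence of a $10$-letter block $B$ with properties (i)--(iii), but you never produce $B$ or argue that one exists. So for odd $n$ nothing has actually been proved. This is not a formality. The identical search for $C_3$ (cyclic words of length $6$) has no solution, since $\chi''_{a}(C_3)=\chi''_{a}(K_3)=5$ by Fact~\ref{Fact 5.3}(1). As you note yourself, the natural periodic candidates also fail. Whether a suitable length-$10$ word exists is exactly the content of the theorem at $n=5$, and a proposal that says ``I would find it by a short hand search'' leaves that content unverified.

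Your plan does go through once a witness is supplied. One choice is $B=1321341234$, read with vertices at even positions.
\begin{itemize}
\item The vertices $v_0,\dots,v_4$ get colors $1,2,3,1,3$, and the edges $e_0,\dots,e_4$ get $3,1,4,2,4$.
\item All ten cyclic $3$-windows of $B$ have distinct letters.
\item The missing colors at $v_0,\dots,v_4$ are $2,4,2,3,1$, so $B$ alone is an AVD-total $4$-coloring of $C_5$.
\end{itemize}
Now consider $B(1234)^k$ for $k\ge 1$.
\begin{itemize}
\item The windows across the two junctions are $341,412$ and $341,413$, so properness is preserved.
\item The missing colors at the vertices of $B$ are unchanged, because the letter preceding $B$ is still $4$ and the letter following $B$ is still $1$.
\item Each inserted $1234$ contributes two vertices with missing colors $3,1$.
\end{itemize}
The cyclic sequence of missing colors is therefore $2,4,2,3,1,3,1,\dots,3,1$, with no two consecutive terms equal. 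This gives $\chi''_{a}(C_n)\le 4$ for every odd $n\ge 5$. With this witness written down and checked, your proof is complete and self-contained, which is more than the paper itself provides for this fact.
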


\begin{fact}[{\cite[Lemma 2.1]{ZCYLW2005}}]\label{Fact 2.4}
If $n = 2$ or $3$, then $\chi''_{a}(P_n) = 3$, and if $n \geq 4$, then $\chi''_{a}(P_n) = 4$.
\end{fact}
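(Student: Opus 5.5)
The statement is Fact~2.4: $\chi''_{a}(P_n)=3$ for $n\in\{2,3\}$ and $\chi''_{a}(P_n)=4$ for $n\ge 4$. I would prove it from scratch by a lower bound from counting colors at a vertex, followed by explicit colorings. Write $P_n=v_1v_2\cdots v_n$ and $e_i=v_iv_{i+1}$.

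\textbf{Lower bounds.} A vertex $v$ of maximum degree, together with its $\Delta$ incident edges, forms $\Delta+1$ pairwise adjacent or incident elements. Hence $\chi''_{a}(G)\ge\Delta(G)+1$ for any graph $G$.
\begin{itemize}
\item For $P_2$, the two vertices and the edge are pairwise adjacent or incident, so at least $3$ colors are needed.
\item For $P_3$, we have $\Delta=2$, so again at least $3$ colors are needed.
\item For $n\ge 4$, suppose only $3$ colors are used. Every internal vertex $v_i$ ($2\le i\le n-1$) sees its own color and the colors of its two edges. These are three distinct colors, so $C(v_i)=\{1,2,3\}$. Since $n\ge4$, the internal vertices $v_2,v_3$ are adjacent and $C(v_2)=C(v_3)$, which is a contradiction. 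So at least $4$ colors are needed.
\end{itemize}

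\textbf{Upper bounds.}
\begin{itemize}
\item For $P_2$, take $f(v_1)=1$, $f(v_2)=2$, $f(e_1)=3$. Then $C(v_1)=\{1,3\}\ne\{2,3\}=C(v_2)$.
\item For $P_3$, take $f(v_2)=1$, $f(e_1)=2$, $f(e_2)=3$, $f(v_1)=3$, $f(v_3)=2$. This coloring is proper. The only adjacent pairs involve $v_2$, and $C(v_2)=\{1,2,3\}$ has size $3$ while the endpoints have sets of size $2$.
\item For $n\ge4$, color the edges alternately: $f(e_i)=1$ for odd $i$ and $f(e_i)=2$ for even $i$. Color the vertices alternately: $f(v_i)=3$ for odd $i$ and $f(v_i)=4$ for even $i$. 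The coloring is proper, since consecutive edges, consecutive vertices, and each vertex with its incident edges all receive different colors.
\end{itemize}

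\textbf{Distinguishing property for $n\ge4$.} Each internal vertex has $C(v_i)=\{1,2,3\}$ or $\{1,2,4\}$ according to the parity of $i$. Consecutive internal vertices have opposite parity, so their sets differ. An endpoint has a $2$-element color set while its neighbor (internal, as $n\ge3$) has a $3$-element set, so these also differ.

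\textbf{Expected obstacle.} The only real step is the lower bound for $n\ge 4$, which rests on the observation that a $3$-coloring forces every degree-$2$ vertex to see all three colors. Everything else is a direct check.
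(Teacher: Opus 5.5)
Your proof is correct and complete. The lower bounds check out: the general bound $\Delta+1$, the three mutually adjacent or incident elements $v_1,e_1,v_2$ for $P_2$, and the observation that with three colors the adjacent internal vertices $v_2,v_3$ both see the full palette. The explicit colorings also check out.

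The paper gives no proof of this statement; it simply imports it from Zhang et al.\ \cite{ZCYLW2005}. So your argument is a self-contained replacement for a citation, not a reconstruction of anything in the text.

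Your key lower-bound step for $n\ge 4$ is the path case of a general principle the paper itself uses later, in the proof of Theorem~\ref{Theorem 4.9}(1). If two adjacent vertices both have maximum degree, then with only $\Delta+1$ colors each sees the whole palette, which forces $\chi''_a\ge\Delta+2$.

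Your route also has a small bonus. The same alternating coloring (edges $1,2$, vertices $3,4$) works verbatim on even cycles, and the same lower bound applies there. This gives $\chi''_a(C_{2n})=4$, which is all Proposition~\ref{Proposition 2.5} needs from Fact~\ref{Fact 2.3}. Your argument therefore makes Proposition~\ref{Proposition 2.5} independent of both cited facts.
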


\begin{prop}\label{Proposition 2.5}
Let $G$ be a connected nontrivial graph with $\Delta(G) \leq 2$. Then 
\[
\chi''_{a}(S(G)) = 
\begin{cases} 
3 & \text{if } G \cong P_2, \\ 
4 & \text{otherwise}. 
\end{cases}
\]
\end{prop}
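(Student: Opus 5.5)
A connected nontrivial graph with $\Delta(G)\le 2$ is a path $P_n$ ($n\ge 2$) or a cycle $C_n$ ($n\ge 3$). Subdividing every edge turns these into $S(P_n)\cong P_{2n-1}$ and $S(C_n)\cong C_{2n}$. The plan is to compute $S(G)$ in each case and then read off the value from Facts~\ref{Fact 2.3} and \ref{Fact 2.4}, with no new coloring construction needed.

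\emph{Paths.} If $G\cong P_2$, then $S(G)\cong P_3$, and Fact~\ref{Fact 2.4} gives $\chi''_a(P_3)=3$. If $G\cong P_n$ with $n\ge 3$, then $S(G)\cong P_{2n-1}$ with $2n-1\ge 5\ge 4$. Fact~\ref{Fact 2.4} then gives $\chi''_a(S(G))=4$.

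\emph{Cycles.} If $G\cong C_n$ with $n\ge 3$, then $S(G)\cong C_{2n}$ with $2n\ge 6\ge 4$. Fact~\ref{Fact 2.3} gives $\chi''_a(S(G))=4$.

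Combining the cases, the value is $3$ exactly when $G\cong P_2$ and $4$ otherwise, as claimed. The only step needing care is the identification of $S(P_n)$ and $S(C_n)$: each edge gains exactly one new vertex, so the vertex counts become $n+(n-1)=2n-1$ for paths and $n+n=2n$ for cycles, and the resulting graphs are again a path and a cycle. As a sanity check, Theorem~\ref{Theorem 2.1} gives the upper bound $\Delta(S(G))+2=4$, which agrees with the computed values.
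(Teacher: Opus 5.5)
Your proposal is correct and matches the paper's proof essentially step for step. Like the paper, you reduce to $G\cong P_n$ or $G\cong C_n$, identify $S(P_n)\cong P_{2n-1}$ and $S(C_n)\cong C_{2n}$, and read off the values from Facts~\ref{Fact 2.4} and \ref{Fact 2.3}; the consistency check via Theorem~\ref{Theorem 2.1} is harmless but not needed.
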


\begin{proof}
By assumption, it follows that
$G\cong P_n$ for some $n\geq 2$ or $G\cong C_n$ for some $n\geq 3$. 

\textbf{Case (i):} If $G \cong P_n$ for $n\geq 2$, then $S(G) \cong P_{2n-1}$. 
\begin{itemize}
    \item If $n=2$, then $S(G) \cong P_3$. By Fact~\ref{Fact 2.4}, $\chi''_{a}(S(G))=\chi''_{a}(P_3) = 3$.
    \item If $n \geq 3$, then $2n-1 \geq 5$. Thus, by Fact~\ref{Fact 2.4}, $\chi''_{a}(S(G))=\chi''_{a}(P_{2n-1}) = 4$.
\end{itemize}

\textbf{Case (ii):} If $G \cong C_n$ for $n \geq 3$, then $S(G) \cong C_{2n}$ where $2n \geq 6$. By Fact~\ref{Fact 2.3}, we have $\chi''_{a}(S(G))=\chi''_{a}(C_{2n}) = 4$.
\end{proof}

In $S(G)$, let $V_1=V(G)$ and let $V_2$ be the set of vertices obtained by subdividing each edge of $G$.

\begin{thm}\label{Theorem 2.6}
If $G$ is a connected graph and $\Delta(G)\geq 3$ then $S(G)$ is $AVD$-Type 1.
\end{thm}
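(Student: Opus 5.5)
The plan is to prove the matching lower and upper bounds $\Delta(G)+1\le \chi''_{a}(S(G))\le \Delta(G)+1$, with the upper bound coming from one explicit total coloring of $S(G)$. Write $\Delta=\Delta(G)\geq 3$. Every vertex of $V_2$ has degree $2$ in $S(G)$ and every $x\in V_1$ keeps its degree $deg_G(x)$, so $\Delta(S(G))=\Delta$. The lower bound is immediate: at a vertex of maximum degree, the vertex and its $\Delta$ incident edges must all receive distinct colors in any proper total coloring, so $\chi''_{a}(S(G))\ge \Delta+1$.

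For the upper bound, the first step is to edge-color $S(G)$. Since $S(G)$ is bipartite, Fact~\ref{Fact 2.2} gives a proper edge coloring of $S(G)$ with colors $\{1,\dots,\Delta\}$.

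The second step is to color the vertices, using the extra color $\Delta+1$ only on $V_1$.
\begin{itemize}
\item Every vertex of $V_1$ gets color $\Delta+1$. This is proper because $V_1$ is independent in $S(G)$ and no edge uses color $\Delta+1$.
\item A subdivision vertex $w\in V_2$ is incident with exactly two edges, colored $a\neq b$ from $\{1,\dots,\Delta\}$. Give $w$ any color from $\{1,\dots,\Delta\}\setminus\{a,b\}$, which is nonempty because $\Delta\geq 3$.
\end{itemize}
The color of $w$ differs from its incident edges and from its two neighbours in $V_1$, which have color $\Delta+1$. So the total coloring is proper and uses $\Delta+1$ colors.

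The third step checks the distinguishing condition. Every edge of $S(G)$ joins some $x\in V_1$ to some $w\in V_2$. By construction $\Delta+1\in C_{S(G)}(x)$. Since $w$ and both of its incident edges use colors from $\{1,\dots,\Delta\}$, we have $\Delta+1\notin C_{S(G)}(w)$. Hence $C_{S(G)}(x)\neq C_{S(G)}(w)$.

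I expected the main obstacle to be the vertices of degree $2$ in $G$, where $|C(x)|=|C(w)|=3$ and a cardinality argument does not separate the two sets. Reserving color $\Delta+1$ exclusively for $V_1$ handles these cases uniformly, so no case analysis by degree is needed. The hypothesis $\Delta\geq 3$ is used only to guarantee a free color for each $w$, which is consistent with Proposition~\ref{Proposition 2.5}, where this construction is unavailable.
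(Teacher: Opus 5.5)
Your proposal is correct and follows essentially the same route as the paper's proof. Both color the edges of the bipartite graph $S(G)$ with $\Delta(G)$ colors by K\"onig's theorem, and both give every vertex of $V_1$ the color $\Delta(G)+1$. Each subdivision vertex then takes a color from $\{1,\dots,\Delta(G)\}$ avoiding its two incident edge colors, which is possible because $\Delta(G)\geq 3$. The fact that $\Delta(G)+1$ lies in $C_{S(G)}(x)$ for $x\in V_1$ but not in $C_{S(G)}(w)$ for $w\in V_2$ separates the two color sets across every edge.
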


\begin{proof}
We have $\Delta(G)\geq3$, $deg_{S(G)}(v)=deg_G(v)$ for every $v\in V_1$, and $deg_{S(G)}(v)=2$ for every $v\in V_2$. Hence, $\Delta(S(G))=\Delta(G)$.
We have $\Delta(S(G))+1\leq \chi''(S(G))\leq \chi''_{a}(S(G))$. Thus, it suffices to prove that $\chi''_{a}(S(G))\leq\Delta(G)+1$.
We construct an AVD-total coloring $f$ of $S(G)$ with colors from the set $S=\{1,...,\Delta(G)+1\}$. 
By Fact \ref{Fact 2.2}, there is a proper edge coloring $g$ of $S(G)$ with colors from the set $\{1,...,\Delta(G)\}$
as $S(G)$ is a bipartite graph. Let $f(e)=g(e)$ for any $e\in E(S(G))$.
Let $f(v)=\Delta(G)+1$ for all $v\in V_{1}$.
Fix a subdivision vertex $v'=w_{vw}\in V_{2}$.

\begin{figure}[h]
\centering
\begin{tikzpicture}[
scale=0.54,
every node/.style={circle, draw, inner sep=0.8pt, minimum size=3mm},
dotnode/.style={draw=none}
]

\draw[rounded corners] (-4,-1.2) rectangle (-1,1.4);
\draw[rounded corners] (1,-1.2) rectangle (4,1.4);

\node[draw=none] at (-4.5,1) {$V_1$};
\node[draw=none] at (4.5,1) {$V_2$};

\node (v) at (-3,0.7) {$v$};
\node (dots1) [dotnode] at (-3,0) {$\cdots$};
\node (w) at (-3,-0.7) {$w$};

\node (vw) at (3,0) {$w_{vw}$};
\node (dots2) [dotnode] at (3,-1) {$\cdots$};

\draw (v) -- (vw);
\draw (w) -- (vw);

\end{tikzpicture}
\vspace{-2mm}
\caption{\em Subdivision graph $S(G)$ with bipartitions $V_1$ and $V_2$}
\end{figure}

Let 
$R_{v'}=\{1,...,\Delta(G)+1\}\backslash\{f(v),  f(vv'), f(wv')\}=\{1,...,\Delta(G)\}\backslash\{  f(vv'), f(wv')\}$. 
Since $\Delta(G)\geq 3$, we have $R_{v'}\neq \emptyset$. Let $f(v')$ be any color from $R_{v'}$.  
From the construction, it follows that $f$ is a proper total coloring of $S(G)$.
We show that $f$ is an AVD-total coloring of $S(G)$.
Since $S(G)$ is bipartite with bipartitions $V_1$ and $V_2$, all edges in $S(G)$ have one endpoint in $V_1$ and the other in $V_2$. 
Let $vv'\in E(S(G))$, where $v\in V_1$ and $v'=w_{vw}\in V_2$ is the subdivision vertex corresponding to some edge $vw\in E(G)$.
Since $f(v)=\Delta(G)+1\in C_{S(G)}(v)$ and $\Delta(G)+1\notin C_{S(G)}(v')$ (as $f(v')$, $f(vv')$, and $f(wv')$ are different from $\Delta(G)+1$), we have $C_{S(G)}(v)\neq C_{S(G)}(v')$.
Thus, for every edge $uv\in E(S(G))$, we have $C_{S(G)}(u)\neq C_{S(G)}(v)$, and so $f$ is an AVD-total coloring.
\end{proof}
\section{Deleted Lexicographic products}

\begin{fact}[\cite{Yap1996}]\label{Fact 3.1}
Let $G$ be a graph with $n$ vertices. If $\Delta(G) \geq \frac34 n$, then $G$ satisfies the TCC.
\end{fact}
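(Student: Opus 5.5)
Fact~\ref{Fact 3.1} is quoted from \cite{Yap1996} (the result is due to Hilton and Hind) and is used only as a black box, so the paper does not need to reprove it. If I had to reconstruct an argument, I would follow the standard \emph{complement-matching} strategy for dense graphs. The idea is to pick the vertex colors first, using very few colors because the graph is dense, and then to extend to the edges with an edge-coloring argument.

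\textbf{Step 1 (vertex colors).} Write $\Delta=\Delta(G)$ and let $\overline{G}$ be the complement. Every vertex of maximum degree has degree at most $n-1-\Delta\leq n/4-1$ in $\overline{G}$. A nonedge of $G$ is an edge of $\overline{G}$, so its two endpoints may share a color. I would therefore take a maximum matching $M$ in $\overline{G}$ and give each edge of $M$ a common vertex color. Every unmatched vertex gets a color of its own. The vertex color classes are then independent sets of $G$ of size at most $2$.

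\textbf{Step 2 (reduce to edge coloring).} Form an auxiliary graph $G^{*}$ by adding one new vertex $x_c$ for each vertex color class $c$, joined to the vertices of that class. A proper edge coloring of $G^{*}$ restricts to a total coloring of $G$: the color of the edge $x_c u$ serves as the color of $u$. However, each $x_c$ forces its whole class to share one color, so $x_c$ must be merged, which is not a literal edge coloring. To handle this I would use the Hilton--Hind device of adding a single universal vertex $v^{*}$ and requiring only that the $v^{*}$-edges within each class be colored alike. The degree of each original vertex rises by one, to at most $\Delta+1$. The target is then a coloring with $\Delta+2$ colors, obtained from Vizing's theorem applied to $G+v^{*}$ after $v^{*}$ is split into its color classes.

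\textbf{Step 3 (counting).} The split copies of $v^{*}$ have small degree (at most $2$), so the maximum degree of the split graph is $\Delta+1$. Vizing's theorem then gives $\Delta+2$ colors. The remaining task is to make sure the identification constraints within each class do not create conflicts. This is the step where $\Delta\geq\frac34 n$ is used. Density guarantees that $\overline{G}$ is sparse near high-degree vertices and that $M$ can be chosen to cover every vertex of degree $\Delta$ or $\Delta-1$. After that, a Kempe-chain recoloring repairs any class whose two $v^{*}$-edges received different colors.

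The main obstacle is Step 3, specifically the Kempe-chain repair. Recoloring one class can break another, and the threshold $\frac34 n$ is exactly what is needed to bound the number of problematic classes. My first attempt would be induction on $n-\Delta$, deleting a matched pair of $\overline{G}$ together with a suitable color class. This is delicate, which is why I would rely on \cite{Yap1996} rather than reprove it here.
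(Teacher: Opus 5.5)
The paper gives no proof of Fact~\ref{Fact 3.1}. It is imported from \cite{Yap1996} (it is indeed the Hilton--Hind theorem) and used only in Proposition~\ref{Proposition 3.2}, so treating it as a black box is exactly what the paper does. The reconstruction you sketch, however, is not a viable route.

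Step~1 is harmless. Since $\delta(\overline{G})=n-1-\Delta<\lfloor n/2\rfloor$, $\overline{G}$ has a matching of size $n-1-\Delta$, so at most $\Delta+1$ vertex colors are used. But Step~1 only reduces the problem to the following: for the fixed vertex coloring $c$, find a proper edge coloring of $G$ from $\{1,\dots,\Delta+2\}$ in which every edge $uv$ avoids $c(u)$ and $c(v)$. That is a list-edge-coloring problem with lists of size exactly $\Delta$. Vizing's theorem alone does not give it, and this is where the work of Hilton and Hind, and the hypothesis $\Delta\ge\frac34 n$, actually lies.

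Steps~2--3 do not bridge that gap, for three reasons.

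\emph{The ``alike'' condition always fails.} If $v^{*}$ is split into one vertex $x_c$ per class, the two edges of a pair class meet at $x_c$. So \emph{every} proper edge coloring gives them different colors, and the ``colored alike'' condition fails for all pair classes, not for a few exceptional ones.

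\emph{Vertex properness is never enforced.} For $uw\in E(G)$, the vertices $u$ and $w$ lie in different classes, so the edges at $u$ and $w$ coming from $v^{*}$ are disjoint, whether you split by classes or into pendant edges. Nothing then prevents adjacent vertices from receiving the same color, a requirement your sketch never mentions. The Vizing coloring of the split graph therefore carries none of the constraints that make it a total coloring. The whole difficulty is deferred to a Kempe-chain ``repair'' for which you give no invariant, no termination argument, and no actual use of $\frac34$.

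\emph{The covering claim is false.} You assert that $M$ can be chosen to cover every vertex of degree $\Delta$ or $\Delta-1$. A vertex of degree $n-1$ is isolated in $\overline{G}$, so it cannot be covered. For a case with $\Delta<n-1$, let $\overline{G}$ be a disjoint union of stars $K_{1,s}$ with $s\ge 2$, so $\Delta=n-2\ge\frac34 n$ once $n\ge 8$. Every leaf has degree $\Delta$ in $G$, yet a matching of $\overline{G}$ covers only one leaf per star.

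So the sketch adds nothing reliable beyond the citation. Either cite the result as the paper does, or reproduce the actual Hilton--Hind argument.
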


\begin{prop}\label{Proposition 3.2}
Let $H$ be a connected graph of order $m\ge 5$. If
$(m-4)(n-4)\ge 4$,
then $D_{lex}(K_n,H)$ satisfies the TCC.
\end{prop}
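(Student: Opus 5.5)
The plan is to apply Fact~\ref{Fact 3.1} directly to $D_{lex}(K_n,H)$ after computing its order and maximum degree. The order is $N=nm$. To get the degree, fix a vertex $(g,h)$. By the definition of the deleted lexicographic product, its neighbours are of two kinds. First, the vertices $(g',h')$ with $g'\neq g$ (all pairs in $K_n$ are adjacent) and $h'\neq h$; there are $(n-1)(m-1)$ of these. Second, the vertices $(g,h')$ with $hh'\in E(H)$; there are $deg_H(h)$ of these. Hence
\[
deg(g,h)=(n-1)(m-1)+deg_H(h),
\]
and choosing $h$ of maximum degree gives $\Delta(D_{lex}(K_n,H))=(n-1)(m-1)+\Delta(H)$.

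Since $H$ is connected with $m\ge 5\ge 2$ vertices, we have $\Delta(H)\ge 1$. In fact $\Delta(H)\ge 2$, because a connected graph on at least $3$ vertices is not a matching; I would use this stronger bound only if it is needed. Fact~\ref{Fact 3.1} applies once
\[
(n-1)(m-1)+\Delta(H)\ \ge\ \tfrac34 nm .
\]
Multiplying by $4$ and expanding, this is equivalent to
\[
nm-4n-4m+4+4\Delta(H)\ge 0 .
\]
Now $(m-4)(n-4)=nm-4n-4m+16$, so the condition becomes
\[
(m-4)(n-4)-12+4\Delta(H)\ge 0 .
\]
With the hypothesis $(m-4)(n-4)\ge 4$, this reduces to $4\Delta(H)\ge 8$, that is, $\Delta(H)\ge 2$. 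This is exactly the stronger bound justified above.

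The only delicate point is this last step. The hypothesis $(m-4)(n-4)\ge4$ is calibrated so that the bound $\Delta(H)\ge 2$ is precisely what is required. I will therefore state explicitly that a connected graph of order $m\ge 3$ has a vertex of degree at least $2$, since otherwise $H$ would be a single edge or disconnected. I will also note that the hypothesis forces $n\ge 5$, so $K_n$ is nontrivial and the degree formula holds. With the inequality in hand, Fact~\ref{Fact 3.1} gives $\chi''(D_{lex}(K_n,H))\le\Delta(D_{lex}(K_n,H))+2$.
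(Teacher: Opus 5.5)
Your proposal is correct and follows the paper's proof. Like the paper, you compute $\Delta(D_{lex}(K_n,H))=\Delta(H)+(n-1)(m-1)$, use connectivity to get $\Delta(H)\ge 2$, and verify the $\tfrac34$-condition of Fact~\ref{Fact 3.1}; your rewriting of that condition as $(m-4)(n-4)-12+4\Delta(H)\ge 0$ spells out the inequality $2+(n-1)(m-1)\ge\tfrac34 mn$, which the paper asserts without computation.
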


\begin{proof}
The graph $D_{lex}(K_n,H)$ has order $mn$. 
By assumption, $\Delta(H)\ge2$ and $(m-4)(n-4)\ge4$. Hence
$
\Delta(D_{lex}(K_n,H))
=
\Delta(H)+(n-1)(m-1)\ge
2+(n-1)(m-1)\ge\frac34 mn.
$
By Fact \ref{Fact 3.1},
$
D_{lex}(K_n,H)
$
satisfies the TCC.
\end{proof}

\begin{fact}[\cite{Yap1996}]\label{Fact 3.3}
For any integer $n \geq 3$ there exists an $n$ edge coloring of $K_{n,n}$ such that
$K_{n,n}$ has a perfect matching with $n$ distinct colors.
\end{fact}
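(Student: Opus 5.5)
The plan is to translate the statement into the language of Latin squares and then build the required square explicitly.

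Label the two sides of $K_{n,n}$ as $\{a_1,\dots,a_n\}$ and $\{b_1,\dots,b_n\}$. A proper $n$-edge coloring of $K_{n,n}$ is the same thing as a Latin square $L$ of order $n$: we color the edge $a_ib_j$ with $L(i,j)$. A perfect matching whose $n$ edges receive distinct colors is then a set of $n$ cells, one in each row and one in each column, with pairwise distinct symbols. In other words it is a \emph{transversal} of $L$. So it suffices to exhibit, for every $n\geq 3$, a Latin square of order $n$ that has a transversal. I would split into two cases according to the parity of $n$.

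\textbf{Odd $n$.} Take the cyclic square $L(i,j)=i+j \pmod n$ on $\mathbb{Z}_n$. For each fixed $c\in\mathbb{Z}_n$, the broken diagonal $T_c=\{(i,i+c): i\in\mathbb{Z}_n\}$ has symbols $2i+c$. Since $2$ is invertible modulo the odd number $n$, these symbols are distinct, so $T_c$ is a transversal. This already settles odd $n$. It also gives what the even case needs: for odd $n$ the transversals $T_0$ and $T_1$ are disjoint.

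\textbf{Even $n\geq 4$.} Put $m=n-1$, which is odd with $m\geq 3$, and start from the cyclic square of order $m$. Apply the standard \emph{prolongation} along the transversal $T_0$:
\begin{enumerate}[(a)]
\item add a new row $n$, a new column $n$ and a new symbol $n$;
\item for each cell $(i,i)\in T_0$ with symbol $s_i$, write the new symbol $n$ into $(i,i)$, and write $s_i$ into cells $(i,n)$ and $(n,i)$;
\item put $n$ into cell $(n,n)$.
\end{enumerate}
One checks routinely that the result is a Latin square of order $n$. Each old symbol moved out of row $i$ and column $i$ exactly once. Because $T_0$ is a transversal, the new column and the new row each receive every old symbol exactly once. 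The symbol $n$ occupies the positions of $T_0$ together with $(n,n)$, which meet every row and column exactly once.

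For the transversal of the new square, observe that the cells of $T_1$ are untouched by the prolongation, since $T_1$ is disjoint from $T_0$. These cells still carry the $m$ distinct old symbols and cover rows and columns $1,\dots,m$. Adding the cell $(n,n)$, which carries the new symbol $n$, gives a transversal of the order-$n$ square, and hence the desired rainbow perfect matching in $K_{n,n}$.

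The main obstacle is the even case. The natural cyclic square has no transversal for even $n$, which is also why $n=2$ is excluded. The key idea is to combine prolongation with the existence of two \emph{disjoint} transversals in an odd cyclic square, so that one transversal is consumed by the construction while the other survives. The only step needing care is the verification that the prolonged array is Latin.
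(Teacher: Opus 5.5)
Your proof is correct. Note that the paper gives no proof of this statement: it quotes it from Yap's monograph and uses it as a black box in the case $l(v_kv_l)=1$ of the proof of Theorem~\ref{Theorem 3.4}. So your proposal does not follow or depart from a proof in the paper; it supplies a self-contained one.

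The reduction is right. Since $K_{n,n}$ is $n$-regular, a proper $n$-edge-coloring is exactly a Latin square, and a rainbow perfect matching is a transversal. The odd case via broken diagonals of the cyclic square is fine. The even case also checks out: you prolong the order-$(n-1)$ cyclic square along $T_0$, then complete the untouched disjoint transversal $T_1$ with the cell $(n,n)$.

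One sentence in the Latin-square verification is loosely worded. In row $i\le m$ only the symbol $s_i$ moves, and it stays inside row $i$ (from column $i$ to column $n$). Likewise it stays inside column $i$ (from row $i$ to row $n$). So each old row and column still contains every old symbol once, plus the new symbol once. The new row and column are filled by the $m$ distinct symbols of $T_0$.

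Your aside that $n=2$ is excluded "because" the cyclic square has no transversal is also slightly off. The real reason is that no Latin square of order $2$ has a transversal: every perfect matching of a properly $2$-edge-colored $C_4$ is monochromatic.

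One remark tied to how the paper uses the fact. In Theorem~\ref{Theorem 3.4} the rainbow matching must be the specific matching $\{(v_k,x)(v_l,x): x\in V(H)\}$, and the vertex colors are taken to be the colors on that matching. Your construction delivers this after permuting columns so that the transversal lies on the main diagonal. This is harmless, since relabeling one side of $K_{n,n}$ preserves properness, but you should state it explicitly if the argument were inserted into the paper.

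Compared with the citation, your route makes the section self-contained and explains why $n\geq 3$ is needed. The citation buys only brevity.
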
 

\begin{thm}\label{Theorem 3.4}
    Let $G$ be a class-I graph, $H$ be a graph of order $n\geq 3$ such that $\chi'_{a}(H)\leq \Delta(H)+k$ for $k\in \{0,1,2\}$. Then $\chi''_{a}(D_{lex}(G, H))\leq \Delta(D_{lex}(G, H)) +k+1$.
    Consequently, if $H$ satisfies the
    Conjecture \ref{Conjecture 1.4} and $G$ is a class-I graph, then 
    $D_{lex}(G, H)$ satisfies the AVD-TCC.
\end{thm}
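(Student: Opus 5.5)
The plan is a ``block'' construction: colour the edges between fibres by blocks of $n-1$ colours indexed by a $\Delta(G)$-edge-colouring of $G$, colour the edges inside fibres by copies of an AVD edge colouring of $H$, and reserve one extra colour for all vertices. Write $\Delta_G=\Delta(G)$ and $\Delta_H=\Delta(H)$.

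\textbf{Step 1 (degree count).} The vertex $(g,h)$ has degree $deg_G(g)(n-1)+deg_H(h)$ in $D_{lex}(G,H)$. Taking $g$ of maximum degree in $G$ and $h$ of maximum degree in $H$ gives
\[
\Delta(D_{lex}(G,H))=\Delta_G(n-1)+\Delta_H .
\]
So the target number of colours is $\Delta_G(n-1)+\Delta_H+k+1$.

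\textbf{Step 2 (inter-fibre edges).} Fix a proper edge colouring $c$ of $G$ with colours $1,\dots,\Delta_G$, which exists since $G$ is class-I. For each edge $g_1g_2\in E(G)$, the edges between the fibres $H_{g_1}=\{g_1\}\times V(H)$ and $H_{g_2}$ form $K_{n,n}$ minus the perfect matching $\{(g_1,h)(g_2,h)\}$. This graph is $(n-1)$-regular bipartite, so by Fact~\ref{Fact 2.2} it can be edge-coloured with a block $P_{c(g_1g_2)}$ of $n-1$ colours. Take the blocks $P_1,\dots,P_{\Delta_G}$ pairwise disjoint. Since $c$ is proper at $g$, the sets of colours at different fibre vertices never clash, and each vertex $(g,h)$ sees every colour of $\bigcup_{j\in c(g)}P_j$, where $c(g)$ is the set of $c$-colours at $g$.

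\textbf{Step 3 (intra-fibre edges and vertices).} Inside each fibre, copy an AVD edge colouring of $H$ using a further disjoint palette $Q$ of size $\Delta_H+k$. Give every vertex a single new colour $0$. This uses exactly $\Delta_G(n-1)+\Delta_H+k+1$ colours, and the colouring is proper. Then
\[
C((g,h))=\{0\}\cup\bigcup_{j\in c(g)}P_j\cup D^{g}_H(h),
\]
where $D^g_H(h)$ is the colour set at $h$ in the copy of $H$ placed on fibre $g$. For an edge inside a fibre, the three parts are disjoint and the $Q$-parts differ because the colouring of $H$ is AVD, so the two sets differ.

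\textbf{Step 4 (main obstacle: edges between fibres).} The hard case is an edge $(g_1,h)(g_2,h')$ with $h\neq h'$. If $c(g_1)\neq c(g_2)$ we are done, but for regular $G$ these sets coincide. Then we need $D^{g_1}_H(h)\neq D^{g_2}_H(h')$, which fails whenever $h,h'$ happen to get equal colour sets in $H$. To fix this, I would first try using different copies of the colouring of $H$ on different fibres. Take a proper vertex colouring of $G$ and apply to the $H$-colouring on fibre $g$ a permutation $\pi_g$ of $Q$ depending only on the colour class of $g$, chosen so that no colour set of $H$ is mapped by $\pi_{g_1}^{-1}\pi_{g_2}$ to another colour set of $H$. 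A cyclic shift, or a permutation fixing no colour set, is the natural candidate. Failing that, I would break ties through the vertex colours: use Fact~\ref{Fact 3.3} on the colour-$1$ class of $c$ to recolour those inter-fibre edges with $n$ colours, the extra one being $0$, aligning the rainbow matching with the deleted diagonal. Each vertex then misses a prescribed colour of $P_1\cup\{0\}$, and that missing colour can serve as its vertex colour. Since it is indexed by $h$ through a bijection $\sigma$ that can be chosen freely, $\sigma$ can be chosen so that the missing colours of $(g_1,h)$ and $(g_2,h')$ differ whenever $h\ne h'$. Applied to all of $P_1\cup\{0\}$ this would give no difference, because both vertices see all of it, so the real work is making the tie-break survive inside the union. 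This is where I expect most of the effort to go.

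\textbf{Step 5 (the consequence).} The second assertion follows by taking $k=2$: if $H$ satisfies Conjecture~\ref{Conjecture 1.4}, then $\chi'_a(H)\le\Delta_H+2$, so the bound gives $\chi''_a(D_{lex}(G,H))\le\Delta(D_{lex}(G,H))+3$.
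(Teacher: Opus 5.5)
Your Step 4 is a genuine gap, and you have located it correctly: the proposal never shows $C((g_1,h))\neq C((g_2,h'))$ when $c(g_1)=c(g_2)$ (the same set of $G$-colours at both ends) and $D_H(h)=D_H(h')$.

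For $k=0$ no construction can close this gap, because that case of the statement is false. Take $G=K_2$ and $H=K_{2,3}$ with parts $\{a,b\}$ and $\{x,y,z\}$. Every edge of $H$ joins a vertex of degree $3$ to one of degree $2$, so every proper $3$-edge-colouring is AVD and $\chi'_a(H)=3=\Delta(H)$. In $D_{lex}(K_2,H)$ the vertices $(g_1,a)$ and $(g_2,b)$ are adjacent, and both have degree $4+3=7=\Delta$. In a proper total colouring with $\Delta+1=8$ colours each of them sees all $8$ colours, so $\chi''_a(D_{lex}(K_2,H))\geq 9>\Delta+1$. More generally, $k=0$ fails whenever $G$ has an edge between two maximum-degree vertices and $H$ has two distinct maximum-degree vertices.

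This also explains why your permutation device cannot work in general. For $k=0$ every maximum-degree $h$ has $D_H(h)=Q$, and every permutation fixes $Q$. It fails for $k=1$ too, e.g.\ for $H=C_6$. There every AVD $3$-edge-colouring uses each $2$-subset of $Q$ at exactly two vertices, so for any $\pi$ some $h'\neq h$ has $D_H(h')=\pi(D_H(h))$. Your vertex-colour tie-break is absorbed, as you observed yourself, because both ends see the whole $n$-colour block.

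Separately, Step 3 is not a proper total colouring: giving every vertex the colour $0$ gives adjacent vertices the same colour. The repair is your fallback, which is exactly the paper's construction. Colour class $1$ of $G$ gets the $n$-colour block $\{1,\dots,n\}$, and each $(g,h)$ receives $\sigma(h)$. Fact~\ref{Fact 3.3} is then used so that the deleted matching edge at $(g,h)$ would carry $\sigma(h)$.

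The paper disposes of your Step 4 in Case (ii) of Claim~\ref{Claim 3.5} by asserting $D_{H_{v_j}}((v_j,a))\neq D_{H_{v_j}}((v_j,b))$ because the edge colouring of $H$ is AVD. That only holds when $ab\in E(H)$. For non-adjacent $a,b$ it is precisely the situation you isolated, and the one in the example above. So the paper does not supply the missing idea either.

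The construction does go through under an extra hypothesis. One option is that the colouring of $H$ satisfies $D_H(a)\neq D_H(b)$ for all $a\neq b$. Another is that the $\Delta(G)$-edge-colouring of $G$ is itself AVD, so that $c(g_1)\neq c(g_2)$ on every edge. Without such a hypothesis, neither your argument nor the paper's establishes the bound for $k\in\{1,2\}$, and the stated consequence for the AVD-TCC remains unproved.
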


\begin{proof}
    Fix $k\in \{0,1,2\}$. The maximum degree of $D_{lex}(G, H)$ is  $\Delta(D_{lex}(G, H))=\Delta(H)+\Delta(G)(n-1)$. 
    We divide $\Delta(H) + \Delta(G)(n-1) + k+1$ colors into the following disjoint sets:
\begin{align*}
C_0 &= \{a_1^0, a_2^0, \dots, a_{\Delta(H)+k}^0\}, \\
C_i &= \{a_1^i, a_2^i, \dots, a_{n-1}^i\}, 2\leq i\leq \Delta(G),\\
C_{\Delta(G)+1} &= \{1,...,n\}.
\end{align*}

\begin{figure}[htbp]
\vspace{-2mm}
    \centering
    \begin{tikzpicture}[
        scale=0.69, transform shape,
        vertex/.style={circle, draw, fill=black, inner sep=0pt, minimum size=4.5pt},
        gvertex/.style={circle, draw, fill=white, thick, inner sep=0pt, minimum size=5.5pt},
        setcontainer/.style={draw, rectangle, rounded corners=6pt, dashed, black, inner sep=12pt},
        gcontainer/.style={draw, rectangle, rounded corners=10pt, thick, black, inner sep=10pt},
        hcontainer/.style={draw, rectangle, rounded corners=6pt, black, inner sep=8pt},
        joinstyle/.style={draw, thin, black}
    ]

        \foreach \i in {1,...,5} {
            \node[vertex] (u\i) at (0, \i*0.75 + 1.0) {};
        }
        \node (dotsY) at (-0.2, 3.75) {$\vdots$};
        
        \foreach \j in {1,...,5} {
            \node[vertex] (w\j) at (2.6, \j*0.75 + 1.0) {};
        }
        \node (dotsY) at (2.8, 3.75) {$\vdots$};

        \node[hcontainer, fit=(u1) (u5), label=above:$H_{v_k}$] (Hvk) {};
        \node[hcontainer, fit=(w1) (w5), label=above:$H_{v_{l}}$] (Hvk1) {};

        \foreach \i in {1,...,5} {
            \foreach \j in {1,...,5} {
                \ifnum\i=\j\else
                    \draw[joinstyle] (u\i) -- (w\j);
                \fi
            }
        }

        \node[gvertex, label=below:$v_1$] (vr) at (-2.0, 0.4) {};
        \node (dotsX) at (-1.0, 0.4) {$\cdots$};
        \node[gvertex, label=below:$v_k$] (vk) at (0, 0.4) {};
        
        \node[gvertex, label=below:$v_{l}$] (vk1) at (2.6, 0.4) {};
        \node (dotsY) at (3.6, 0.4) {$\cdots$};
        \node[gvertex, label=below:$v_{m}$] (vr1) at (4.6, 0.4) {};

        \draw[black] (vk) -- (vk1);

        \draw[dotted, thick, ->, black] (vk) -- (Hvk);
        \draw[dotted, thick, ->, black] (vk1) -- (Hvk1);

        \node[gcontainer, fit=(vr) (vk) (vk1) (vr1), label=left:$G$] (G) {};

\end{tikzpicture}
    \caption{\em Edges between $H_{v_k}$ and $H_{v_{l}}$, two copies of $H$ in $D_{lex}(G,H)$.}
    \label{Figure 2}
\end{figure}

Let $H_{v_i}$ be the copy of $H$ with respect to the vertex $v_i\in V(G)$.
Define a total coloring $c:V(D_{lex}(G, H))\cup E(D_{lex}(G, H))\rightarrow 
C_0\cup \bigcup_{t=2}^{\Delta(G)+1} C_t$ of $D_{lex}(G, H)$ as follows:
\begin{enumerate}
    \item For each $v_r\in V(G)$, we consider an AVD-edge coloring of $H_{v_r}$ with colors from $C_0$ so that the corresponding elements of $H_{v_r}$ receive the same colors. In particular, let
$
\varphi:E(H)\rightarrow C_0
$
be an AVD-edge coloring of \(H\). For every \(v_r\in V(G)\), define
$c((v_r,x)(v_r,y))=\varphi(xy)$ for all $xy\in E(H)$.    

\item We assign distinct colors from $C_{\Delta(G)+1}$ to the vertices of each
copy of $H$.

    \item Since $G$ is a class-I graph, there exists a proper edge coloring $l:E(G)\rightarrow \{1,...,\Delta(G)\}$ of $G$. For every edge $v_kv_l\in E(G)$ with $l(v_kv_l)=i$, let
$B(H_{v_k},H_{v_l})$ be the bipartite graph with bipartitions
$H_{v_k}$ and $H_{v_l}$.

    \begin{itemize}    
        \item Fix $i\in \{2,...,\Delta(G)\}$ and $l(v_kv_l)=i$.
     Since
$B(H_{v_k},H_{v_l})\cong K_{n,n}-M$, where $M$ is a perfect matching,
every vertex of $B(H_{v_k},H_{v_l})$ has degree $n-1$. Hence, by
Fact \ref{Fact 2.2},
\[
\chi'(B(H_{v_k},H_{v_l}))
=
\Delta(B(H_{v_k},H_{v_l}))
=
n-1
=
|C_i|.
\]
Therefore, there exists a proper edge coloring
$
\phi_{kl}:E(B(H_{v_k},H_{v_l}))\rightarrow C_i
$.
We define $c(e)=\phi_{kl}(e)$ for every $e\in E(B(H_{v_k},H_{v_l}))$.

        \item If $i=1$, then for every edge $v_kv_l\in E(G)$ with $l(v_kv_l)=i$, we color edges in $B(H_{v_k},H_{v_l})$.
        By Fact \ref{Fact 3.3},  there exists a proper edge  coloring $\psi_{kl}$ that colors the join edges between $H_{v_k}$ and $H_{v_l}$ in the join graph $H_{v_k}\vee H_{v_l}$ with colors from $C_{\Delta(G)+1}$ such that the edges joining corresponding vertices are assigned $n$ different colors.
        We observe that in $B(H_{v_k},H_{v_l})$, the edges joining the corresponding vertices of $H_{v_k}$ and $H_{v_l}$ are absent, and the endpoints of these join edges are colored by the color of these absent edges in (2). We define $c(e)=\psi_{kl}(e)$ for every $e\in E(B(H_{v_k},H_{v_l}))$.
\end{itemize}
\end{enumerate}

\begin{claim}\label{Claim 3.5}
The coloring $c$ is an AVD-total coloring of $D_{lex}(G,H)$.
\end{claim}

\begin{proof}
By construction, the coloring $c$ is a proper total coloring of $D_{lex}(G,H)$. Let $xy\in E(D_{lex}(G, H))$.
We show that $
C_{D_{lex}(G, H)}(x)\neq C_{D_{lex}(G, H)}(y).
$

\textbf{Case (i).}
Let $x=(v_i,a)\in V(H_{v_i})$ and $y=(v_i,b)\in V(H_{v_i})$ for some $a,b\in V(H)$ and $v_{i} \in V(G)$.
Then $D_{H_{v_i}}(x)=\{c(ux):ux\in E(H_{v_i})\}\neq \{c(uy):uy\in E(H_{v_i})\}= D_{H_{v_i}}(y)$ as we considered an AVD-edge coloring of $H_{v_i}$ in (1).
Since
$
D_{H_{v_i}}(x),\; D_{H_{v_i}}(y)\subseteq C_0,
$
and
\[
C_{D_{lex}(G, H)}(x)\setminus D_{H_{v_i}}(x),\;
C_{D_{lex}(G, H)}(y)\setminus D_{H_{v_i}}(y)
\subseteq \bigcup_{t=2}^{\Delta(G)+1} C_t,
\]
where
$
\left(\bigcup_{t=2}^{\Delta(G)+1} C_t\right)\cap C_0=\emptyset,
$
it follows that
$
C_{D_{lex}(G, H)}(x)\neq C_{D_{lex}(G, H)}(y).
$

\textbf{Case (ii).} Let $x=(v_i,a)\in V(H_{v_i})$ and $y=(v_j,b)\in V(H_{v_j})$ for some $a,b\in V(H)$ and $v_i,v_j\in V(G)$ such that $i\neq j$. Let
$
x'=(v_j,a)
$
be the vertex of $H_{v_j}$ corresponding to $x$. Since the edges of every copy of $H$ is colored identically,
$
D_{H_{v_i}}(x)=D_{H_{v_j}}(x').
$
As $H_{v_j}$ is AVD-edge colored,
$
D_{H_{v_j}}(x')\neq D_{H_{v_j}}(y).
$
Hence,
$
D_{H_{v_i}}(x)\neq D_{H_{v_j}}(y).
$
Applying the same argument as in Case (i), we conclude that $C_{D_{lex}(G, H)}(x)\neq C_{D_{lex}(G, H)}(y)$.
\end{proof}
\end{proof}

\begin{thm}\label{Theorem 3.6}
If $G$ is a graph of order $m$ such that $m\geq \Delta(G)+3$ and $\chi''_{a}(G)\leq \Delta(G)+2$, then $\chi''_{a}(D_{lex}(P_n, G))\leq \Delta(D_{lex}(P_n, G))+2$ for every  $n\geq 3$. 
\end{thm}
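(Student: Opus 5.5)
The plan is to build the coloring copy by copy, as in the proof of Theorem~\ref{Theorem 3.4}. Write $D=D_{lex}(P_n,G)$ with copies $G_1,\dots,G_n$ of $G$ indexed along the path $P_n=v_1\cdots v_n$. Between consecutive copies $G_j$ and $G_{j+1}$, the join edges form $B_j\cong K_{m,m}-M$, which is $(m-1)$-regular. Interior copies have maximum degree
\[
\Delta(D)=\Delta(G)+2(m-1),
\]
so the target is $\Delta(G)+2m$ colors. I would split these into three disjoint palettes:
\[
A=\{a_1,\dots,a_{\Delta(G)+2}\},\quad B^{1}=\{b^1_1,\dots,b^1_{m-1}\},\quad B^{2}=\{b^2_1,\dots,b^2_{m-1}\}.
\]
Each copy $G_j$ gets an AVD-total coloring of $G$ with colors from $A$, which exists by the hypothesis $\chi''_a(G)\le\Delta(G)+2$. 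The path $P_n$ is class-I with edges alternately labelled $1,2$. By Fact~\ref{Fact 2.2}, $B_j$ can be properly edge-colored with the $m-1$ colors of $B^{1}$ when $j$ is odd and of $B^{2}$ when $j$ is even. Properness of the whole total coloring is then immediate, since the three palettes are disjoint and consecutive $B_j$ use different palettes.

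For the AVD condition, the key observation is that every vertex of $B_j$ has degree exactly $m-1=|B^{t}|$, so it sees \emph{all} of the palette used on $B_j$.
\begin{itemize}
\item For a vertex $x=(v_j,a)$ in an interior copy ($2\le j\le n-1$), this gives $C_D(x)=C'(a)\cup B^{1}\cup B^{2}$, where $C'(a)\subseteq A$ is the color set of $a$ in the AVD-total coloring placed on $G_j$.
\item For a vertex in an end copy, $C_D(x)=C'(a)\cup B^{t}$ for a single $t$.
\end{itemize}
Two adjacent vertices inside one copy are separated by their $A$-parts, since the restriction to $A$ is an AVD-total coloring of $G$. An edge between an end copy and an interior copy is separated by the $B$-parts, because one endpoint misses a whole palette; here $n\ge 3$ guarantees the neighbouring copy of an end copy is interior.

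The remaining and main difficulty is an edge $(v_j,a)(v_{j+1},b)$ with $a\ne b$ and both copies interior. Then the $B$-parts coincide and the condition becomes $C'_j(a)\ne C'_{j+1}(b)$, where $a$ and $b$ need not be adjacent in $G$. My first attempt is to use different colorings on consecutive copies: $f$ on odd copies and $\sigma\circ f$ on even copies for a suitable permutation $\sigma$ of $A$, requiring $\sigma(C_f(b))\ne C_f(a)$ for all $a\ne b$.

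If no such $\sigma$ exists, I would abandon the ``full palette'' structure and use the hypothesis $m\ge\Delta(G)+3$. The idea is to color $B_j$ so that each vertex misses a \emph{prescribed} color. Concretely, color $K_{m,m}$ with $m$ colors via Fact~\ref{Fact 3.3}, with the matching $M$ rainbow, and delete $M$. Then $(v_j,a)$ misses the color of its matching edge, and these missing colors are distinct for distinct $a$. To stay within budget, one of the $m$ colors must be shared with $A$, or the vertex colors of a copy must be taken from the matching colors. The inequality $m\ge \Delta(G)+3>|A|$ is exactly what makes room for choosing $m$ distinct vertex colors in a copy while the internal edges still fit. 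With this, the missing-color sets of $(v_j,a)$ and $(v_{j+1},b)$ differ, which separates the endpoints. Checking that this modified assignment stays proper at the junction between $A$-colored internal edges and the reused colors is the step I expect to need the most care.
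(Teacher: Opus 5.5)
Your treatment of edges inside a copy and of edges meeting an end copy is correct, so your scheme already settles $n=3$. The case you single out as the main difficulty, however, is not proved: an edge $(v_j,a)(v_{j+1},b)$ with both copies interior, which occurs as soon as $n\ge 4$. Your first attempt cannot work in general.

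Take $G=C_5$, which satisfies the hypotheses ($m=5=\Delta(G)+3$ and $\chi''_a(C_5)=4$). With a common palette $A$ of size $4$, each $C'_j(a)$ equals $A\setminus\{\mu_j(a)\}$. The AVD condition inside copy $j$ forces $\mu_j$ to be a proper coloring of $C_5$, so it uses at least $3$ colors. You need $\mu_j(a)\neq\mu_{j+1}(b)$ for all $a\neq b$. Two consequences follow:
a color taken by $\mu_j$ at two vertices is forbidden for $\mu_{j+1}$ everywhere, and a color taken by $\mu_j$ only at $a$ is allowed for $\mu_{j+1}$ only at $a$.
\begin{itemize}
\item If $\mu_j$ uses all four colors, the two vertices sharing a color have no admissible value.
\item If $\mu_j$ uses three colors (class sizes $2,2,1$), four vertices of $C_5$ are all forced to the single unused color, and any four vertices of $C_5$ contain an edge.
\end{itemize}
So no choice of AVD-total colorings of the copies from one common palette works, with or without a permutation $\sigma$.

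The fallback does not repair this. It is not an actual construction, and you leave properness unchecked. Moreover, suppose both join graphs at an interior copy $G_{v_j}$ are colored with $m$ colors and a rainbow matching deleted. Then their palettes must be disjoint: a color in both would, at each vertex of $G_{v_j}$, have to be that vertex's missing color in one of the two join graphs, which is possible for at most two vertices. Also no edge inside $G_{v_j}$ can use either palette. That leaves only $\Delta(G)$ colors for $E(G_{v_j})$, which requires $G$ to be class-I. This is not assumed, and $C_5$ is not class-I.

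The missing idea is to give consecutive copies different palettes and to make a join palette contain a whole copy palette. This is where $m\ge\Delta(G)+3$ is genuinely used.
\begin{itemize}
\item The paper takes three disjoint sets $S_1,S_2,S_3$ of size $\Delta(G)+2$ and two padding sets $S'_2,S'_3$ of size $m-\Delta(G)-3$.
\item The copies are colored from $S_1,S_3,S_2,S_1,S_3,S_2,\dots$, and the whole pattern repeats with period six.
\item The $(m-1)$-color palette of the join edges between $G_{v_{i+1}}$ and $G_{v_{i+2}}$ contains the entire palette $S_k$ of $G_{v_i}$; this is possible since $m-1\ge\Delta(G)+2$.
\item No join edge at $G_{v_i}$ uses $S_k$.
\end{itemize}
Then for $x\in G_{v_i}$ and $y\in G_{v_{i+1}}$ with $i+1<n$, your full-palette observation shows that $y$ sees all $\Delta(G)+2$ colors of $S_k$. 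Meanwhile $x$ sees at most $\Delta(G)+1$ of them (its own color and its internal edges). For the last pair $G_{v_{n-1}},G_{v_n}$ the roles are reversed, using the join edges between $G_{v_{n-2}}$ and $G_{v_{n-1}}$, whose palette contains that of $G_{v_n}$. This separation is a pure count and does not depend on which AVD-total colorings sit on the copies. That independence is exactly what your common-palette scheme lacks.
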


\begin{proof}
Fix an integer $n\geq 3$. Let $V(P_n)=\{v_1,...,v_n\}$ be an enumeration of the set of vertices of $P_n$. In $D_{lex}(P_n, G)$, there are $n$ copies of $G$ and the maximum degree of $D_{lex}(P_n, G)$ is $\Delta(G)+2(m-1)$. 
Let $G_{v_i}$ denote the copy of $G$ with respect to the vertex $v_i\in V(P_n)$.
Since $m-1\geq\Delta(G)+2$, we split $\Delta(G)+2(m-1)+2$ colors into the following disjoint sets:
$S_1=\{a_1,...,a_{\Delta(G)+2}\}$, $S_2=\{b_1,...,b_{\Delta(G)+2}\}$, $S'_{2}=\{b_{\Delta(G)+3},...,b_{m-1}\}$, $S_3=\{c_1,...,c_{\Delta(G)+2}\}$, $S'_{3}=\{c_{\Delta(G)+3},...,c_{m-1}\}$.\footnote{If $m-1=\Delta(G)+2$ then $S'_2=\emptyset$ and $S'_3=\emptyset$.} Define a total coloring
$c:V(D_{lex}(P_n,G))\cup E(D_{lex}(P_n,G))
\longrightarrow \bigcup_{i=1}^{3}S_i\cup S'_2\cup S'_3$ as follows: 

\begin{enumerate}
    \item Since $\chi''_{a}(G)\leq \Delta(G)+2$, consider an AVD-total coloring of $G_{v_1}$ with colors in $S_1$. 
    The edges between $G_{v_{1}}$ and $G_{v_{2}}$ are colored with the colors in $S_2\cup S'_2$.
    \item Similar to (1), consider 
    an AVD-total coloring of $G_{v_2}$ with colors in $S_3$ and color the edges between
    $G_{v_{2}}$ and $G_{v_{3}}$ using the colors in $S'_3\cup S_1$. 
    
    \item Assign an AVD-total coloring to $G_{v_3}$ using colors from the set $S_2$
    and assign the colors in the set $S'_2\cup S_3$ to the edges between $G_{v_3}$ and $G_{v_4}$.
    
    \item Consider 
    an AVD-total coloring of $G_{v_4}$ with colors in $S_1$ and color the edges between $G_{v_4}$ and $G_{v_5}$ with the colors in the set $S_2\cup S'_3$.

    \item Using the colors in the set $S_3$, consider 
    an AVD-total coloring of $G_{v_5}$
    and use colors in the set $S_1\cup S'_2$ for the edges between $G_{v_5}$ and $G_{v_6}$.

    \item Assign an AVD-total coloring to $G_{v_6}$ using colors from the set $S_2$ and assign the colors in the set $S_3\cup S'_3$ to the edges between $G_{v_6}$ and $G_{v_7}$.

    \item Color the vertices and edges of $G_{v_{7}}$ as in $G_{v_{1}}$, and repeat the previous steps till all the elements of $D_{lex}(P_n, G)$ are colored.
\end{enumerate}

It is clear that the coloring $c$ is a proper total coloring of $D_{lex}(P_n,G)$. 
We show that $c$ is an AVD-total coloring of $D_{lex}(P_n,G)$. 
Let $xy\in E(D_{lex}(P_n,G))$. 

\textbf{Case (i).} Suppose $x=(v_i,a)\in V(G_{v_i})$ and $y=(v_i,b)\in V(G_{v_i})$ for some $a,b\in V(G)$ and $v_{i} \in V(P_n)$.
Since we considered an AVD-total coloring of $G_{v_i}$, we obtain $C_{G_{v_i}}(x)\neq C_{G_{v_i}}(y)$.  Moreover, $C_{G_{v_i}}(x), C_{G_{v_i}}(y)\subseteq S_k$ for some $k\in \{1,2,3\}$.
Without loss of generality, let $k=1$. Then
\[
C_{D_{lex}(P_n, G)}(x)\setminus C_{G_{v_i}}(x),\;
C_{D_{lex}(P_n, G)}(y)\setminus C_{G_{v_i}}(y)
\subseteq S_2\cup S'_2\cup S_3 \cup S'_3,
\]
where $S_1 \cap (S_2\cup S'_2\cup S_3 \cup S'_3)=\emptyset$.
Consequently, $C_{D_{lex}(P_n, G)}(x)\neq C_{D_{lex}(P_n, G)}(y)$.
 
\textbf{Case (ii).} Suppose $x=(v_i,a)\in V(G_{v_i})$ and $y=(v_j,b)\in V(G_{v_j})$ for some $a,b\in V(G)$ and $v_i,v_j\in V(P_n)$ such that $i< j<n$. By the definition of $c$, the elements of $G_{v_i}$ are colored with the colors in $S_k$ for some $k\in\{1,2,3\}$.
Among the incident edges of $x$ in $D_{lex}(P_n, G)$, only the edges of $G_{v_i}$ incident to $x$ are colored with the colors in $S_k$. Since $x$ is also colored with the colors in $S_k$, we obtain $|S_k\cap C_{D_{lex}(P_n, G)}(x)|\leq \Delta(G)+1$.
However, $v_iv_j\in E(P_n)$ since $xy\in E(D_{lex}(P_n,G))$. Thus, $j=i+1$ as we assumed $i<j$. 
By the definition of $c$, all the colors in $S_k$ are used to color the edges between $G_{v_j}$ and $G_{v_{j+1}}$.
Consequently, 
\[|S_k\cap C_{D_{lex}(P_n, G)}(y)|= |S_k| = \Delta(G)+2>|S_k\cap C_{D_{lex}(P_n, G)}(x)|.\] 
Thus, there exists $a\in S_k$ such that $a\in C_{D_{lex}(P_n, G)}(y)$ but  $a\not\in C_{D_{lex}(P_n, G)}(x)$. Hence $C_{D_{lex}(P_n, G)}(x)\neq C_{D_{lex}(P_n, G)}(y)$.

\textbf{Case (iii).} Suppose $x=(v_{n-1},a)\in V(G_{v_{n-1}})$ and $y=(v_n,b)\in V(G_{v_n})$ for some $a,b\in V(G)$. If the elements of $G_{v_n}$ are colored with the colors in $S_k$ for some $k\in\{1,2,3\}$, then similar to the arguments of Case (ii), we have $|S_k\cap C_{D_{lex}(P_n, G)}(y)|\leq \Delta(G)+1$ whereas $|S_k\cap C_{D_{lex}(P_n, G)}(x)|= \Delta(G)+2$. Thus, $C_{D_{lex}(P_n, G)}(x)\neq C_{D_{lex}(P_n, G)}(y)$.
\end{proof}
\section{Applications of Latin squares} 

\begin{defn}\label{Definition 4.1}
A {\em Latin square} of order $k$ is a $k \times k$ matrix based on the elements $1, 2,...,k$ such that each element occurs exactly once in each row and exactly once in each column. A Latin square $M = [m_{i,j}]$ of order $k$ is said to be {\em commutative} if $m_{i,j} = m_{j,i}$, for $1 \leq i, j \leq k$ and $M$ is said to be {\em idempotent} if $m_{i,i} = i$, for $1 \leq i \leq k$. If the rows of $M$ are just cyclic permutations (one shift of the elements to the left) of the previous row, then $M$ is said to be {\em anti-circulant}.
\end{defn}

\begin{defn}\label{Definition 4.2}
Fix an integer $k$. We say that a graph $G=(V(G), E(G))$ is {\em $k$-edge choosable} if for any assignment $L = (L(e))_{e\in E(G)}$ of lists of available colors to the edges of $G$, there is a proper edge coloring $f$ of $G$ such that $f(e) \in L(e)$ and $\vert L(e)\vert=k$ for all $e\in E(G)$. The {\em list chromatic index}
of $G$, denoted by $\chi_{L}'(G)$, is the minimum integer $k$ such that $G$ is $k$-edge choosable.  
\end{defn}

\begin{fact}\label{Fact 4.3}
The following holds:
\begin{enumerate}
    \item (Galvin; \cite{Gal1995}) If $G$ is a bipartite graph, then $\chi'(G)=\chi_{L}'(G)=\Delta(G)$.
 
    \item An idempotent commutative Latin square (ICLS) of order $k$ exists if and only if $k$ is odd.
    
    \item  If $M(k) = [m_{i,j}]$ is a Latin square where $m_{i,j} \equiv (i + j)k \pmod{2k - 1}, 1 \leq m_{i,j} \leq 2k - 1$, for $1 \leq i, j \leq 2k - 1$, then $M(k)$ is an anti-circulant ICLS of order $2k - 1$. 

    \item If $M = [m_{i,j}]$ is a Latin square where $m_{i,j} \equiv (i + j)-1 \pmod{n}, 1 \leq m_{i,j} \leq n$, for $1 \leq i, j \leq n$, then $M$ is an anti-circulant commutative Latin square of order $n$. 
\end{enumerate}
\end{fact}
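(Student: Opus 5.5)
Fact~\ref{Fact 4.3} has four parts, and I will treat them separately. Part (1) is Galvin's theorem, which I will cite as in \cite{Gal1995}. The inequality $\Delta(G)\le\chi'(G)$ holds trivially, and $\chi'(G)\le\chi'_L(G)$ holds because constant lists are allowed. Galvin's argument gives $\chi'_L(G)\le\Delta(G)$ for bipartite $G$: orient the line graph using a proper $\Delta$-edge coloring (König, Fact~\ref{Fact 2.2}) so that every vertex has out-degree less than $\Delta$, show that every induced subgraph has a kernel (via the stable marriage theorem), and then apply the kernel lemma. I will not reprove this. I will only record the chain of inequalities $\Delta\le\chi'\le\chi'_L\le\Delta$.

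For part (2), I will prove necessity by a counting argument. Suppose $M$ is an idempotent commutative Latin square of order $k$. Fix a symbol $s$. It occurs $k$ times in $M$, once per row. By idempotence, exactly one occurrence lies on the diagonal, namely at $(s,s)$. By commutativity, the off-diagonal occurrences come in symmetric pairs $(i,j),(j,i)$. Hence $k-1$ is even, so $k$ is odd. For sufficiency, part (3) gives an explicit ICLS for every odd order $2k-1$, so it suffices to prove (3).

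For part (3), set $N=2k-1$. Since $\gcd(k,N)=1$ (because $2k-N=1$), multiplication by $k$ is a bijection on $\mathbb{Z}_N$. Consequently, in a fixed row $i$, the map $j\mapsto (i+j)k \bmod N$ is a bijection, and the same holds for columns. This shows $M(k)$ is a Latin square. Symmetry in $i,j$ gives commutativity. Idempotence follows from $m_{i,i}\equiv 2ik=i(N+1)\equiv i \pmod N$. Anti-circulance follows because $m_{i+1,j}=m_{i,j+1}$, so each row is the previous row shifted one position to the left.

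Part (4) is the same argument with multiplier $1$ in place of $k$: $(i+j)-1 \bmod n$ is bijective in each variable and symmetric in $i,j$, and $m_{i+1,j}=m_{i,j+1}$ gives the cyclic left shift. The only step with real content is sufficiency in (2), and that reduces to the invertibility of $k$ modulo $2k-1$, which is immediate. I therefore expect no serious obstacle beyond citing Galvin's theorem.
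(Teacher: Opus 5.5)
Your proposal is correct. The paper gives no proof of Fact~\ref{Fact 4.3}: it lists the four items as background, cites Galvin only for (1), and takes (2)--(4) from the standard Latin-square literature without argument.

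For (1) you do what the paper does: cite Galvin's theorem, together with the trivial chain $\Delta(G)\le\chi'(G)\le\chi'_L(G)$. The real difference is that you verify (2)--(4).
\begin{itemize}
\item Your counting argument gives necessity in (2): each symbol occurs exactly once on the diagonal by idempotence, and an even number of times off it by commutativity.
\item For (3), you use that $k$ is invertible modulo $2k-1$ and that $2k\equiv 1 \pmod{2k-1}$. This gives the Latin property, commutativity and idempotence of $M(k)$.
\end{itemize}

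This buys something over the paper's bare statement. The paper phrases (3) and (4) conditionally (``If $M(k)$ is a Latin square \dots''). You show that hypothesis holds automatically, and that is exactly what is needed to derive the existence half of (2) from (3). The existence statements are what the later constructions rely on: Theorem~\ref{Theorem 4.9} uses (2) and (3), and Theorem~\ref{Theorem 4.5} uses (4).

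One small detail should be stated explicitly: the wrap-around in the anti-circulant property. The identity $m_{i+1,j}=m_{i,j+1}$ must be read with column indices modulo $N$, so you also need $m_{i+1,N}=m_{i,1}$. This holds because $(i+1+N)k\equiv(i+1)k \pmod N$, and the same check works in (4).
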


\begin{fact}[Vizing's Theorem]\label{Fact 4.4}
For any simple graph $G$, $\Delta(G)\leq \chi'(G) \leq \Delta(G)+1$.
\end{fact}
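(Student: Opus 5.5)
The statement is Vizing's Theorem (Fact~\ref{Fact 4.4}). The lower bound is immediate and the upper bound is the substance, which we would obtain by the classical Kempe-chain and fan-recoloring argument.

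For the lower bound, $\chi'(G)\geq\Delta(G)$ holds because the $\Delta(G)$ edges at a vertex of maximum degree are pairwise incident. They must therefore all receive distinct colors.

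For the upper bound, we fix $k=\Delta(G)+1$ colors and argue by induction on $|E(G)|$. Suppose every edge except $xy_0$ has been properly colored. Since each vertex has degree at most $\Delta(G)<k$, every vertex misses at least one color, and we write $m(v)$ for some color missing at $v$.

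We then build a maximal fan $y_0,y_1,\dots,y_\ell$ of distinct neighbours of $x$. In this fan, each edge $xy_{i}$ ($i\ge1$) has color $m(y_{i-1})$.

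\textbf{Case 1: some $m(y_\ell)$ is missing at $x$.} We rotate the fan: recolor $xy_{i}$ with $m(y_{i})$ for $i<\ell$, and color $xy_\ell$ with $m(y_\ell)$. The fan conditions guarantee that this rotation stays proper.

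\textbf{Case 2: $\beta=m(y_\ell)$ is present at $x$.} By maximality of the fan, $\beta=m(y_{j-1})$ for some $j\le\ell$; that is, the edge $xy_j$ has color $\beta$. Let $\alpha=m(x)$. We consider the $(\alpha,\beta)$-Kempe chain $P$ starting at $x$, which is a path since both colors give matchings.

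The key step is the following parity/endpoint argument. The path $P$ cannot end at both $y_{j-1}$ and $y_\ell$, since each of these misses $\beta$ and at most one of them can be the other endpoint of $P$. Accordingly:
\begin{itemize}
\item If $P$ does not end at $y_{j-1}$, we swap the colors on the $(\alpha,\beta)$-chain starting at $y_{j-1}$. After the swap, $\alpha$ becomes missing at $y_{j-1}$ while still being missing at $x$. We then rotate the fan up to $y_{j-1}$ and color $xy_{j-1}$ with $\alpha$.
\item Otherwise, we first shift the fan so that $xy_{j-1}$ is uncolored, and then apply the same swap at $y_\ell$.
\end{itemize}

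The main obstacle is exactly this case analysis. We must verify that the swap does not disturb the fan colors, which is true because only $\alpha$ and $\beta$ change, and that $x$ still misses $\alpha$ after the swap. Each configuration has to be checked carefully.
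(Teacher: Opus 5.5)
The paper does not prove this statement. It quotes Vizing's Theorem as a classical fact and uses only the upper bound: in Theorem~\ref{Theorem 4.5} to edge-color $G_2$ inside the palette $D$ of size $\Delta(G_1)+2$, and in Theorem~\ref{Theorem 4.8} to edge-color $G_2$ with the $\Delta(G_1)+1$ colors of a total coloring of $G_1$, via $\Delta(G_2)\le\Delta(G_1)$. You instead give the standard self-contained proof: the trivial lower bound, then induction on $|E(G)|$ with a maximal fan at $x$ and an $(\alpha,\beta)$-Kempe chain swap. Your outline is correct and is the textbook argument. It buys self-containment, while the citation is all the paper needs.

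One justification should be sharpened. In your second subcase ($P$ ends at $y_{j-1}$), the claim that the swap is harmless ``because only $\alpha$ and $\beta$ change'' does not suffice. The shifted fan $y_{j-1},\dots,y_\ell$ contains the edge $xy_j$ of color $\beta$, and the final rotation gives $xy_{j-1}$ the color $\beta$, so it needs $y_{j-1}$ to still miss $\beta$. The real reason is that the $(\alpha,\beta)$-chain at $y_\ell$ is a different component from $P$, and $P$ contains $x$, $y_j$ and $y_{j-1}$.

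You should also note that the preliminary shift only moves the colors $m(y_0),\dots,m(y_{j-2})$. None of these is $\alpha$ (missing at $x$) or $\beta$ (the color of $xy_j$). Hence the shift does not change the $(\alpha,\beta)$-subgraph, and $P$ is still the component of $x$ ending at $y_{j-1}$.

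Likewise, in the first subcase, $x$ is untouched for the same kind of reason. Since $y_{j-1}$ misses $\beta$, it would have to be an endpoint of $P$ if it lay on $P$. So its chain is disjoint from $P$.
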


\subsection{Join of two graphs}

\begin{thm}\label{Theorem 4.5}
Let $G_1$ and $G_2$ be graphs with $|V(G_1)|=n$ and $|V(G_2)|=m$, where $m>n+1$ and $\Delta(G_1)\geq\Delta(G_2)$. If $G_1$ satisfies the TCC, the join $G=G_1\vee G_2$ satisfies the AVD-TCC.
\end{thm}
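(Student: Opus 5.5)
Throughout, write $G=G_1\vee G_2$, $V(G_1)=\{u_1,\dots,u_n\}$, $V(G_2)=\{w_1,\dots,w_m\}$, $\Delta_1=\Delta(G_1)$, $\Delta_2=\Delta(G_2)$. Since $\Delta_1\geq\Delta_2$ and $m>n$, we have $\Delta_1+m\geq \Delta_2+n$, so $\Delta(G)=\Delta_1+m$. The plan is to build an AVD-total coloring with exactly $\Delta(G)+3=(\Delta_1+2)+(m+1)$ colors. We split them into two disjoint palettes: $A$ with $|A|=\Delta_1+2$, and $B=\mathbb{Z}_{m+1}$ (the integers mod $m+1$).

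\textbf{Construction.} Since $G_1$ satisfies the TCC, we take a proper total coloring of $G_1$ (vertices and edges) using colors from $A$. Each join edge $u_iw_j$ gets color $i+j \pmod{m+1}$ in $B$, in the spirit of the anti-circulant Latin squares of Fact~\ref{Fact 4.3}(4).
\begin{itemize}
\item At $u_i$ the join colors are $i+1,\dots,i+m$, which are distinct. So $u_i$ sees every color of $B$ except exactly $i$.
\item At $w_j$ the join colors are the $n$ consecutive residues $j+1,\dots,j+n$.
\item We color the vertex $w_j$ with $j+n+1 \pmod{m+1}$. This is not among its join colors because $n+1\leq m$. It is also injective in $j$, since $j\leq m<m+1$, so adjacent vertices of $G_2$ get different colors.
\item Finally, we edge-color $G_2$ properly with $\Delta_2+1\leq\Delta_1+1$ colors from $A$, using Vizing's Theorem (Fact~\ref{Fact 4.4}).
\end{itemize}
Properness then holds: $A$ and $B$ are disjoint, and within each palette we have checked incidence and adjacency conflicts above.

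\textbf{AVD check.} There are three kinds of edges.
\begin{itemize}
\item \emph{Edges $u_iw_j$ between the parts.} $C_G(u_i)\cap B=B\setminus\{i\}$ has $m$ elements. $C_G(w_j)\cap B=\{j+1,\dots,j+n+1\}$ has $n+1<m$ elements. Hence the two sets differ; this is where $m>n+1$ is used.
\item \emph{Edges $u_iu_{i'}$ inside $G_1$.} The only color of $B$ missing from $C_G(u_i)$ is $i$, and $i\neq i'$. So the $B$-parts already differ, and we need no AVD property of $G_1$ itself.
\item \emph{Edges $w_jw_{j'}$ inside $G_2$.} The $B$-parts are the cyclic intervals $\{j+1,\dots,j+n+1\}$ and $\{j'+1,\dots,j'+n+1\}$ in $\mathbb{Z}_{m+1}$. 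Both have length $n+1<m+1$, and their starting points differ since $j\not\equiv j'$. Therefore the sets are different.
\end{itemize}

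The main point needing care is the choice of $m+1$, rather than $m$, join colors. With $m+1$ colors every $u_i$ misses a unique color of $B$, which is what distinguishes vertices inside $G_1$ when only the TCC is assumed for $G_1$. The leftover colors of $B$ at each $w_j$ also give a proper vertex coloring of $G_2$, so $G_2$ needs no total-coloring hypothesis. The remaining checks are the routine cyclic-interval and counting arguments above, together with the inequality $\Delta_2+1\leq|A|$.
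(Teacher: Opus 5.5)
Your proposal is correct and is essentially the paper's proof. Your coloring $i+j \pmod{m+1}$ of the join edges, with $w_j$ colored $j+n+1$, is the paper's anti-circulant Latin square construction (first $m$ rows and $n+1$ columns of $m_{i,j}\equiv i+j-1 \pmod{m+1}$) with every color shifted by one; as in the paper, $G_1$ and $E(G_2)$ use a disjoint palette of $\Delta(G_1)+2$ colors, and the cross edges are separated by comparing $m$ with $n+1$. Your remark that each $u_i$ misses exactly the color $i$ of $B$ is a cleaner, explicit version of the column case of Claim~\ref{Claim 4.6}(3), which the paper only asserts by analogy.
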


\begin{proof}
Let $V(G_1)=\{u_1,\dots,u_n\}$ and $V(G_2)=\{v_1,\dots,v_m\}$. 
Then $\Delta(G)=\max\{\Delta(G_1)+m,\,\Delta(G_2)+n\}=\Delta(G_1)+m$.
Consider a Latin square $M=[m_{i,j}]$ of order $m+1$ where $m_{i,j}\equiv (i+j)-1 \pmod{m+1}$
as in Fact \ref{Fact 4.3}(4). 
Let $M'=[m_{i,j}]_{1\leq i\leq m,1\leq j\leq n+1}$ be the submatrix of $M$ consisting of the first $m$ rows and the first $n+1$ columns (see Figure \ref{Figure 3}).
We define
\begin{align*}
g(x)=
\begin{cases}
m_{i,j} & \text{if } x=v_iu_j \text{ with }  1\leq i \leq m, 1\leq j \leq n \\[2mm]
m_{i,n+1} & \text{if } x=v_i, 1\leq i \leq m.
\end{cases}
\end{align*}

\begin{figure}[H]
\centering
\renewcommand{\arraystretch}{0.9}
\[
M =
\begin{NiceArray}{*{11}{c}}[hvlines]
\Block[draw,line-width=1.5pt]{5-6}{}
m_{1,1} & \dots & m_{1,j} & \dots & m_{1,n} &\colorbox{gray!30}{$m_{1,n+1}$} 
& m_{1,n+2} & \dots & m_{1,m+1} \\
\vdots & \vdots & \vdots & \vdots & \vdots
& \vdots & \vdots & \vdots & \vdots \\
m_{i,1} & \dots & m_{i,j} & \dots & m_{i,n}
& \colorbox{gray!30}{$m_{i,n+1}$} & m_{i,n+2} & \dots & m_{i,m+1} \\
\vdots & \vdots & \vdots & \vdots & \vdots
& \vdots & \vdots & \vdots & \vdots \\
m_{m,1} & \dots & m_{m,j} & \dots & m_{m,n}
& \colorbox{gray!30}{$m_{m,n+1}$} & m_{m,n+2} & \dots & m_{m,m+1} \\
m_{m+1,1} & \dots & m_{m+1,j} & \dots & m_{m+1,n}
& m_{m+1,n+1} & m_{m+1,n+2} & \dots & m_{m+1,m+1}
\end{NiceArray}
\]
\vspace{-4mm}
\captionsetup{width=\textwidth}
\caption{
\em The highlighted last-column entries of $M'$ are assigned to the vertices of $G_2$.
}
\label{Figure 3}
\end{figure}

Let $B$ be the complete bipartite graph with bipartitions $V(G_1)$ and $V(G_2)$, which is a subgraph of $G$ and $C$ be the set of entries in $M$.


\begin{claim}\label{Claim 4.6}
The following holds:
\begin{enumerate}
    \item no two adjacent edges in $B$ receive the same color,
     
    \item each vertex $v_i$ receives a color distinct from its incident edges in $B$,
    
    \item Let $D_{B}(u)=\{g(uv):uv\in E(B)\}$ be the set of colors on the incident edges of a vertex $u$ in $B$. Then, $D_{B}(u_i)\neq D_{B}(u_j)$ and $C_{B}(v_i)\neq C_{B}(v_j)$ for any $i\neq j$.
\end{enumerate}
\end{claim}

\begin{proof}
Since each element of $M$ occurs exactly once in each row and column of $M$, (1) and (2) follow.
In order to prove (3), it suffices to show that no two rows of $M'$ contain the same set of entries; an analogous argument applies to the columns.
Suppose for the sake of contradiction,
$
\{m_{i,1}, m_{i,2}, \dots, m_{i,n+1}\}
=
\{m_{r,1}, m_{r,2}, \dots, m_{r,n+1}\}
$
for $i\neq r$. Without loss of generality, assume that $i>r$.
Let $t = m+1$. By the definition of $M$,
$
m_{i,j} \equiv (i + j)-1 \pmod{t}.
$
Since $m_{i,1}\in \{m_{r,1},...,m_{r,n+1}\}$, there must exist some index $j \in \{1, \dots, n+1\}$ such that $m_{i,1}=m_{r,j}$.
Thus, 
$
(i+1)-1 \equiv (r + j)-1 \pmod{t}.
$
Hence
$j \equiv i - r + 1 \pmod{t}.
$
Since $1 \le r<i \le m$ and $m < t$, we obtain $j \le m < t$, which implies
$
j = i - r + 1.
$
Since $M$ is anticirculant, we have
$
m_{i,1}=m_{r,j},\; m_{i,2}=m_{r,j+1},\; \dots,\; m_{i,n+2-(j-1)}=m_{r,n+2}.
$
Since $i>r$, we have
$
j=i-r+1\ge2.
$
Thus, $n+2-(j-1)\le n+1$, and hence $m_{r,n+2}\in\{m_{i,1},\dots,m_{i,n+1}\}$.
However,
$
m_{r,n+2}\notin\{m_{r,1},\dots,m_{r,n+1}\},
$
since every symbol appears exactly once in each row of the Latin square.\footnote{By assumption, $n+2<m+1$. Thus, the entry $m_{r,n+2}$ exists in $M$ but does not belong to $M'$.}
This contradicts the assumption that
$
\{m_{i,1},\dots,m_{i,n+1}\}
=
\{m_{r,1},\dots,m_{r,n+1}\}.
$
So, no two rows of $M'$ contain the same set of entries.
\end{proof}

Let $C$ and $D$ be disjoint color sets with $|D| = \Delta(G_1) + 2$. 
Since $G_1$ satisfies the TCC, there is a proper total coloring $f_1 : V(G_1)\cup E(G_1) \to D$ of $G_1$.
Let $f_2 : E(G_2) \to D$ be a proper edge coloring of $G_2$ 
(which exists by Vizing's Theorem (Fact \ref{Fact 4.4})) since $\Delta(G_2) \leq \Delta(G_1)$.
We now define a total coloring $f:V(G)\cup E(G)\rightarrow C\cup D$ by
\[
f(x)=
\begin{cases}
f_1(x), & \text{if } x\in V(G_1)\cup E(G_1),\\
f_2(x), & \text{if } x\in E(G_2),\\
g(x), & \text{if } x\in E(B)\cup V(G_2).
\end{cases}
\]
It is clear that $C\cap D=\emptyset$ and $|C\cup D|=(m+1)+(\Delta(G_1)+2)=\Delta(G)+3$.

\begin{claim}\label{Claim 4.7}
The coloring $f$ is an AVD-total coloring of $G$.
\end{claim}

\begin{proof}
By construction, 
$f$ is a proper total coloring of $G$.
We show that $C_{G}(x)\neq C_{G}(y)$ for every edge $xy\in E(G)$. 
For all distinct $v_i, v_j\in V(G_2)$, there exists $x\in C$ such that $x\in C_{B}(v_i)$ but $x\not\in C_{B}(v_j)$ by Claim \ref{Claim 4.6}(3). 
Since $C_{G}(v_i)\backslash C_B(v_i)\subseteq D$, $C_{G}(v_j)\backslash C_{B}(v_j)\subseteq D$ and $C\cap D=\emptyset$, we can conclude that $C_{G}(v_{i})\neq C_{G}(v_{j})$. Similarly, applying Claim \ref{Claim 4.6}(3), $C_{G}(u_{i})\neq C_{G}(u_{j})$ for all distinct $u_i,u_j\in V(G_1)$.
If $u_iv_j\in E(G)$ for any $u_i\in V(G_1)$ and $v_j\in V(G_2)$, then 
\[
|C_G(v_j)\cap C| = n+1
\quad \text{and} \quad
|C_G(u_i)\cap C| = m.
\]
By assumption $m>n+1$. Thus, $C_G(u_i)\neq C_G(v_j)$.
\end{proof}
Thus, $\chi''_a(G) \le \Delta(G)+3$, and therefore $G$ satisfies the AVD-TCC.
\end{proof}

\begin{thm}\label{Theorem 4.8}
If $G_1$ is a Type 1 graph on $n$ vertices and $G_2$ is any graph on $n$ vertices such that $\Delta(G_1)\geq \Delta(G_2)$, then $G_1 \vee G_2$ satisfies the AVD-TCC.
\end{thm}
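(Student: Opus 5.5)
The plan is to mimic the proof of Theorem~\ref{Theorem 4.5}, but with one extra color in the Latin-square part to compensate for the fact that $|V(G_1)|=|V(G_2)|$. Let $V(G_1)=\{u_1,\dots,u_n\}$ and $V(G_2)=\{v_1,\dots,v_n\}$, and write $G=G_1\vee G_2$. Since $\Delta(G_2)\le\Delta(G_1)$, we have
\[
\Delta(G)=\max\{\Delta(G_1)+n,\ \Delta(G_2)+n\}=\Delta(G_1)+n,
\]
so the target is a coloring with $\Delta(G_1)+n+3$ colors.

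\textbf{Splitting the colors.} Take two disjoint color sets $C$ and $D$ with $|C|=n+2$ and $|D|=\Delta(G_1)+1$.
\begin{itemize}
\item Since $G_1$ is Type~1, it has a proper total coloring $f_1$ with colors from $D$.
\item By Vizing's Theorem (Fact~\ref{Fact 4.4}), $\chi'(G_2)\le \Delta(G_2)+1\le \Delta(G_1)+1$, so $G_2$ has a proper edge coloring $f_2$ with colors from $D$.
\end{itemize}

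\textbf{The Latin-square part.} Let $M=[m_{i,j}]$ be the anti-circulant commutative Latin square of order $n+2$ from Fact~\ref{Fact 4.3}(4), on the symbol set $C$. Let $M'$ be its submatrix formed by the first $n$ rows and the first $n+1$ columns. Define $g(v_iu_j)=m_{i,j}$ for $1\le i,j\le n$, and $g(v_i)=m_{i,n+1}$. Then set $f=f_1$ on $V(G_1)\cup E(G_1)$, $f=f_2$ on $E(G_2)$, and $f=g$ on $E(B)\cup V(G_2)$, where $B$ is the complete bipartite graph between $V(G_1)$ and $V(G_2)$.

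\textbf{Properness.} This follows as in Claim~\ref{Claim 4.6}(1)--(2). Each symbol occurs once per row and once per column of $M$. All $B$-edges and all vertex colors of $G_2$ lie in $C$. All $G_1$-elements and all $G_2$-edges lie in $D$.

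\textbf{Key step: analogue of Claim~\ref{Claim 4.6}(3).} This is the main point to check. I will verify it directly rather than by the shifting argument, since $m_{i,j}\equiv i+j-1 \pmod{n+2}$.
\begin{itemize}
\item Row $i$ of $M'$ is the set of $n+1$ cyclically consecutive residues $\{i,\dots,i+n\}$ modulo $n+2$. Such a set is determined by its starting point, because it omits exactly one residue. Hence distinct rows $i\ne r$ (with $1\le i,r\le n<n+2$) give distinct sets, i.e.\ $C_B(v_i)\ne C_B(v_r)$.
\item Column $j\le n$, restricted to the $n$ rows, is $\{j,\dots,j+n-1\}$ modulo $n+2$. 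These are $n$ consecutive residues omitting two consecutive residues, so again the set determines $j$. Hence $D_B(u_j)\ne D_B(u_s)$ for $j\ne s$.
\end{itemize}
This is exactly where using order $n+2$ rather than $n+1$ matters. With order $n+1$, every row of $M'$ would be all of $C$, and the $v_i$ could not be distinguished.

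\textbf{AVD property.} Argue as in Claim~\ref{Claim 4.7}. For an edge inside $G_1$ or inside $G_2$, the $C$-parts of the two color sets differ by the previous step, while the remaining colors all lie in $D$, which is disjoint from $C$. For an edge $u_iv_j$ we have
\[
|C_G(v_j)\cap C|=n+1 \quad\text{and}\quad |C_G(u_i)\cap C|=n,
\]
so $C_G(u_i)\ne C_G(v_j)$. Therefore
\[
\chi''_a(G)\le (n+2)+\Delta(G_1)+1=\Delta(G)+3,
\]
so $G$ satisfies the AVD-TCC.
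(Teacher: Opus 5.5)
Your proposal is correct and follows the paper's proof sketch: you use the same split into $n+2$ Latin-square colors (the anti-circulant square of order $n+2$, restricted to its first $n$ rows and $n+1$ columns) and $\Delta(G_1)+1$ colors shared by a Type~1 total coloring of $G_1$ and a Vizing edge coloring of $G_2$, and you finish with the argument of Claim~\ref{Claim 4.7}. Your direct check of the analogue of Claim~\ref{Claim 4.6}(3), via the omitted residues ($i-1$ for row $i$, and $\{j-2,j-1\}$ for column $j$, modulo $n+2$), cleanly supplies the step that the paper leaves as ``analogous.''
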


\begin{proof}[Proof Sketch]
By the assumptions, $G_1$ has a proper total coloring with $\Delta(G_1)+1$ colors, and $G_2$ has a proper edge coloring using the same color set. 
Let $M$ be a Latin square of order $n+2$ as in Fact \ref{Fact 4.3}(4). Let $M'$ consist of the first $n$ rows and the first $n+1$ columns of $M$. Color the vertices of $G_2$ and the edges joining $G_1$ and $G_2$ using the entries of $M'$ as in the proof of Theorem~\ref{Theorem 4.5}. 
This results in an AVD-total coloring by an argument analogous to that in Claim~\ref{Claim 4.7}. 
\end{proof}

\subsection{Central graphs of regular graphs}
\begin{thm}\label{Theorem 4.9}
If $G$ is a connected regular graph with order $n\geq 5$, then the following holds:
\begin{enumerate}
    \item If $n$ is even and $G\not\cong K_n$, then $C(G)$ is an AVD-Type 2 graph. 
    \item If $n$ is odd, then $C(G)$ satisfies the AVD-TCC.  
\end{enumerate}
\end{thm}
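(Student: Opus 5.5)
We propose to construct the colorings explicitly from a total coloring of $K_n$ and then fix the subdivision edges by Galvin's theorem (Fact~\ref{Fact 4.3}(1)). Write $V(G)=\{u_1,\dots,u_n\}$, let $G$ be $r$-regular with $r\ge 2$, and let $w_{ij}$ be the subdivision vertex of $u_iu_j\in E(G)$.

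\emph{Degrees and the lower bound.} In $C(G)$ each $u_i$ has degree $r+(n-1-r)=n-1$, and each $w_{ij}$ has degree $2$. Hence $\Delta(C(G))=n-1$, and $\chi''_a(C(G))\ge n$. For part (1), $G\not\cong K_n$, so some edge $u_iu_j$ of $\overline{G}$ joins two vertices of maximum degree. In any proper total coloring with $\Delta+1$ colors, every vertex of maximum degree sees all $\Delta+1$ colors. So $C(u_i)=C(u_j)$, which is forbidden, and we get $\chi''_a(C(G))\ge n+1$. It therefore remains to build a coloring with $n+1$ colors when $n$ is even, and with $n+2=\Delta+3$ colors when $n$ is odd.

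\emph{Construction for $n$ even.} Since $n+1$ is odd, $K_{n+1}$ (on $u_1,\dots,u_n,z$) has a proper edge coloring with $n+1$ colors in which each vertex misses exactly one color, and these missing colors are pairwise distinct. Such a coloring can be read off, for instance, from the ICLS of Fact~\ref{Fact 4.3}(3). Define the coloring on $C(G)$ as follows.
\begin{itemize}
\item Color $u_i$ with the color of $u_iz$.
\item Color each edge $u_iu_j\in E(\overline{G})$ with its color in $K_{n+1}$.
\end{itemize}
Let $\mu_i$ be the color missing at $u_i$, and let $L_i$ be the set of colors of the $r$ edges $u_iu_j$ with $u_iu_j\in E(G)$, so $|L_i|=r$. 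In the bipartite graph formed by the subdivision edges, give $u_iw_{ij}$ the list $L_i$. This graph has maximum degree $r$, so by Fact~\ref{Fact 4.3}(1) there is a proper edge coloring from these lists. At $u_i$ the result is proper, because $L_i$ is disjoint from the colors already present at $u_i$. Then color $w_{ij}$ with any color avoiding its two edges and the colors of $u_i,u_j$; since $n+1-4\ge 2$, such a color exists. Each $u_i$ now uses $n$ of the $n+1$ colors, exactly missing $\mu_i$.

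\emph{AVD check.} For an edge $u_iu_j$ of $\overline{G}$ we have $\mu_i\neq\mu_j$, so $C(u_i)\neq C(u_j)$. For an edge $u_iw_{ij}$ we have $|C(w_{ij})|=3<n=|C(u_i)|$. This proves part (1).

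\emph{Construction for $n$ odd.} Run the same scheme with $K_{n+2}$ on $u_1,\dots,u_n,z,z'$. Since $n+2$ is odd, it is edge-colorable with $n+2$ colors, each vertex missing a distinct color $\mu$. Color $u_i$ with the color of $u_iz$. The color of $u_iz'$ becomes an additional missing color at $u_i$. Thus $u_i$ misses the pair $\{\mu_i,\,c(u_iz')\}$ and sees $n$ of the $n+2$ colors.

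The list step is unchanged, with the same $L_i$ of size $r$. Coloring $w_{ij}$ now has at least $n+2-4\ge 3$ options. Two $\overline{G}$-neighbours $u_i,u_j$ have distinct missing pairs, since $\mu_i\in$ the pair at $u_i$. We also have $\mu_i\neq c(u_jz')$, because $u_jz'$ is an edge of colour $c(u_jz')$ with endpoint $z'$ and $z'$ is not $u_i$. The same holds symmetrically, so at least one colour separates the two sets. Subdivision vertices are again separated from their neighbours by set size. This gives $\chi''_a(C(G))\le n+2=\Delta+3$, proving part (2).

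\emph{Expected obstacle.} The main risk is in the list step. Restricting each list at $u_i$ to exactly $L_i$ is what keeps the missing color(s) at $u_i$ equal to $\mu_i$ (respectively $\{\mu_i,c(u_iz')\}$) and hence distinct between neighbours. I need to check that Galvin's theorem really applies with lists only on the $u_i$-side: each edge inherits the list of its $V(G)$-endpoint, list size $r$ equals the bipartite maximum degree, and a list coloring with these lists automatically avoids conflicts at $w_{ij}$. I also need to confirm the distinctness of the missing pairs in the odd case carefully.
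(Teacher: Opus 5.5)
Your part (1) is correct and is essentially the paper's argument. The ICLS of order $n+1$ encodes exactly your edge colouring of $K_{n+1}$ (vertex $i$ missing $m_{i,i}$), and your lists $L_i=\{c(u_iu_j):u_iu_j\in E(G)\}$ are the paper's lists $X(u_i)\setminus V(u_i)$. The one difference is that you colour $u_i$ with $c(u_iz)=m_{i,n+1}$ and leave $\mu_i=m_{i,i}$ missing, whereas the paper colours $u_i$ with $m_{i,i}$ and leaves $m_{i,n+1}$ missing. With only one missing colour per vertex, this swap is harmless.

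In part (2), however, the same swap breaks the argument, and the step you flagged is actually false. The reason you give for $\mu_i\neq c(u_jz')$, namely that $z'$ is not $u_i$, proves nothing. The colour class of $\mu_i$ is a perfect matching of $K_{n+2}-u_i$, so it contains an edge $xz'$ with $x\neq u_i$, and nothing prevents $x=u_j$. If in addition $\mu_j=c(u_iz')$, the two missing pairs coincide.

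This really happens. Take $n=7$ and label $K_9$ by $1,\dots,9$ with $u_i=i$, $z=8$, $z'=9$. Colour $ab$ by $m_{a,b}\equiv 5(a+b)\pmod 9$ (the ICLS $M(5)$, values in $\{1,\dots,9\}$, so $\mu_i=i$). Then $\mu_3=3$, $c(u_3z')=6$, $\mu_6=6$ and $c(u_6z')=3$. Hence $C_{C(G)}(u_3)=C_{C(G)}(u_6)=\{1,\dots,9\}\setminus\{3,6\}$. For $G=C_7=u_1u_2\cdots u_7u_1$ the edge $u_3u_6$ lies in $\overline{G}\subseteq C(G)$, so the colouring is not AVD. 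More generally, this colouring gives $u_{N/3}$ and $u_{2N/3}$ the same missing pair whenever $3\mid N=n+2$.

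The repair is the paper's choice: colour $u_i$ with its own missing colour $m_{i,i}$. Then $u_i$ misses $\{m_{i,n+1},m_{i,n+2}\}$, the colours of $u_iz$ and $u_iz'$. Columns have distinct entries, so two such pairs can coincide only if $m_{i,n+1}=m_{j,n+2}$ and $m_{i,n+2}=m_{j,n+1}$. Since $m_{a,b}\equiv k(a+b)$ with $k$ invertible modulo $n+2$, this forces $i\equiv j+1$ and $j\equiv i+1\pmod{n+2}$, i.e.\ $2\equiv 0$, which is impossible. With this change, your Galvin step, the colouring of the $w_{ij}$, and the degree comparison go through unchanged.
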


\begin{proof}
We note that $\Delta(C(G))=n-1$. 

(1). 
Since $G\not\cong K_n$, there are vertices of maximum degree that are adjacent in $C(G)$.
So, $\chi_{a}''(C(G))\geq\Delta(C(G))+2$.
We show $\chi_{a}''(C(G))\leq\Delta(C(G))+2$.
If $n$ is even, then by Fact \ref{Fact 4.3}(2,3), there exists an ICLS $M = [m_{i,j}]$ of order $n+1=\Delta(C(G))$ + 2.
Let $V_{1}=\{u_{1},...,u_{n}\}$ be the set of vertices of $G$ and $V_{2}=\{v_{1},..., v_{q}\}$ be the set of subdivision vertices in $C(G)$.
Let $M'$ consist of the first $n$ rows and the first $n$ columns of $M$.

\begin{claim}\label{Claim 4.10}
There exists a proper total coloring $g$ of $C(G)$ using the symbols from $M'$.
\end{claim}
  
\begin{proof}
First, we define a proper total coloring $\pi$ of $\overline{G}$ from $M'$. 
Let
\begin{itemize}
    \item $\pi(u_i) = m_{i,i}$ for all $1 \le i \le n$, 
    \vspace{2mm}
    
    \item $\pi(u_iu_j) = m_{i,j}$ for all $1 \leq i, j \leq n, i \neq j$ 
such that $u_iu_j \in E(\overline{G})$ (see Figure \ref{Figure 4}).
\end{itemize}
Next, we use Fact \ref{Fact 4.3}(1) 
to define a proper edge coloring
of the bipartite graph $B = C(G) \backslash E(\overline{G})$.
Let $X(u_{i})=\{m_{i,j}:1\leq j\leq n\}$ be the $i^{th}$- row of $M'$. Then for each vertex $u_{i}\in V_{1}$, there are 
\begin{center}
$n-(deg_{\overline{G}}(u_{i})+1)= \Delta(C(G))+ 1 - (deg_{\overline{G}}(u_{i})+1)=\Delta(C(G)) - deg_{\overline{G}}(u_{i})$ 
\end{center}
colors that are not used from $X(u_{i})$.
Since $G$ is regular, $\overline{G}$ is also regular. 
Moreover,
$
\Delta(\overline{G})=n-1-deg_G(u_i)
\text{ and }
\Delta(B)=deg_G(u_i),
$
since the neighbors of $u_i$ in $B$ are precisely the subdivision vertices corresponding to the edges of $G$ incident with $u_i$. Hence,
\[
\Delta(C(G)) - deg_{\overline{G}}(u_{i})
=\Delta(C(G))-\Delta(\overline{G})
=(n-1)-(n-1-deg_G(u_i))
=\Delta(B).
\]

\begin{figure}[H]
\centering
\renewcommand{\arraystretch}{0.93}
\[
M =
\begin{NiceArray}{*{7}{c}}[hvlines]
\Block[draw,line-width=1.5pt]{5-6}{}
\colorbox{gray!30}{$m_{1,1}$} & m_{1,2} & \cdots & m_{1,i} & \cdots & m_{1,n} & m_{1,n+1} \\
\vdots & \vdots & \ddots & \vdots & \ddots & \vdots & \vdots \\
m_{i,1} & m_{i,2} & \cdots & \colorbox{gray!30}{$m_{i,i}$} & \cdots & m_{i,n} & m_{i,n+1} \\
\vdots & \vdots & \ddots & \vdots & \ddots & \vdots & \vdots \\
m_{n,1} & m_{n,2} & \cdots & m_{n,i} & \cdots & \colorbox{gray!30}{$m_{n,n}$} & m_{n,n+1} \\
m_{n+1,1} & m_{n+1,2} & \cdots & m_{n+1,i} & \cdots & m_{n+1,n} & m_{n+1,n+1}
\end{NiceArray}
\]
\vspace{-4mm}
\captionsetup{width=\textwidth}
\caption{
\em The highlighted diagonal entries of $M'$ are used to color the vertices of $G$.
}
\label{Figure 4}
\end{figure}

\begin{figure}[h]
\centering
\begin{tikzpicture}[
    scale=0.57,
    big/.style={draw, rounded corners=8pt, thick},
    small/.style={draw, rounded corners=6pt},
    vertex/.style={circle, fill, inner sep=1.2pt}
]

\node[big, minimum width=3.6cm, minimum height=4.54cm] (V1) at (-2,0.5) {};
\node at (-4.5,3.5) {$V_1$};

\node[big, minimum width=2.7cm, minimum height=4.54cm] (V2) at (4.8,0.5) {};
\node at (6,3.5) {$V_2$};

\node[small, minimum width=1.5cm, minimum height=1.8cm] (T) at (-2,1.2) {};
\node at (-3.7,2.6) {$T$};

\node[small, minimum width=1.5cm, minimum height=1.2cm] (R) at (-2,-2.2) {};
\node at (-3.7,-2.5) {$R$};

\node at (0.6,-2.7) {$\overline{G}$};

\node[vertex,label=above:$u_i$] (ui) at (-2,3.5) {};

\node[vertex,label=below:$u_{i_1}$] (ui1) at (-2,2.46) {};
\node at (-2,1.5) {$\vdots$};
\node[vertex,label=below:$u_{i_k}$] (uik) at (-2,0.7) {};

\node[vertex] (r1) at (-2,-1.5) {};
\node at (-2,-2.2) {$\vdots$};
\node[vertex] (r2) at (-2,-2.9) {};

\draw[bend right=40] (ui) to (r1);
\draw[bend right=55] (ui) to (r2);

\node[vertex,label=right:$w_{u_i u_{i_1}}$] (w1) at (4.5,2) {};
\node at (4.5,0.8) {$\vdots$};
\node[vertex,label=right:$w_{u_i u_{i_k}}$] (wk) at (4.5,-2.7) {};

\draw[line width=1.2pt] (ui) -- (w1);
\draw[line width=1.2pt] (ui1) -- (w1);
\draw[line width=1.2pt] (ui) -- (wk);
\draw[line width=1.2pt] (uik) -- (wk);

\end{tikzpicture}
\caption{
\em Illustration of $C(G)$. Let $u_{i_1},\dots,u_{i_k}$ be the neighbors of $u_i$ in $G$, and let
$
R = V_1 \setminus \{u_{i_1}, \dots, u_{i_k}\}.
$
The edges $u_i u_{i_l}$, for $1 \le l \le k$, are subdivided by new vertices $w_{u_i u_{i_l}}$, and $u_i$ is adjacent to every vertex in $R$ in $C(G)$.}
\label{Figure 5}
\end{figure}

Fix a vertex $u_{i}\in V_{1}$ and an edge $u_{i}v_{j}\in E(B)$.
Define,

\begin{itemize}
    \item $V(u_{i})=\{m_{i,i}\}\cup\{m_{i,j}: u_{i}u_{j}\in E(\overline{G}), 1\leq j\leq n\}$, and 
    \vspace{2mm}
    
    \item $L(u_{i} v_{j})=X(u_{i})-V(u_{i})$.
\end{itemize}

Thus, for each $u_i v_j \in E(B)$, we have $|L(u_i v_j)| = \Delta(B)$.
Assume that $L=(L(u_{i} v_{j}))_{u_{i}v_{j}\in E(B)}$ is an assignment of lists of available colors for the edges of the bipartite graph $B$. By Fact \ref{Fact 4.3}(1), there exists a proper edge coloring $\pi_{1}$ of $B$ such that $\pi_{1}(e)\in L(e)$ for all $e\in E(B)$. 
We define a total coloring $g$ of $C(G)$ using $n$ colors as follows: 
\begin{itemize}
    \item Let $g(u_{i}v_{j})=\pi_{1}(u_{i}v_{j})$ for all $u_{i}v_{j}\in E(B)$, 
    \vspace{2mm}
    
    \item Let $g(v)=\pi(v)$ for all $v\in V_{1}$ and $g(e)=\pi(e)$ for all $e\in E(\overline{G})$,
     \vspace{2mm}
     
    \item If $v_{k}=w_{u_{i}u_{j}}\in V_{2}$ is a vertex that subdivides the edge $\{u_{i},u_{j}\}\in E(G)$, then the set \[T(v_k)=\{m_{i,1},...,m_{i,n}\}\backslash \{g(v_{k}u_{i}),g(v_{k}u_{j}), g(u_{i}), g(u_{j})\}\] is nonempty as $n\geq 5$.
    Let $g(v_{k})$ be any color from $T(v_k)$.
\end{itemize}
By construction, $g$ is a proper total coloring of $C(G)$.
\end{proof}

\begin{claim}\label{Claim 4.11}
The coloring $g$ is an AVD-total coloring of $C(G)$.
\end{claim}

\begin{proof}
Since $M$ is anti-circulant (see Definition~\ref{Definition 4.1}), and the following observations hold, we have
$
C_{C(G)}(u_i)\neq C_{C(G)}(u_j)
$
for any two distinct vertices $u_i$ and $u_j$ in $V_1$.

\begin{enumerate}
    \item For each vertex $u_i \in V_1$, the color class $C_{C(G)}(u_i)$ is exactly the set $\{m_{i,j} : 1 \leq j \leq n\}$.
    \item The entries in the last column of $M$ are pairwise distinct.
\end{enumerate}

For all $u_{i}\in V_{1}$,
$deg_{C(G)}(u_i)=n-1\geq 4$ and for all $v_{j}\in V_{2}$, $deg_{C(G)}(v_{j})=2$. Thus, $C_{C(G)}(u_{i})\neq C_{C(G)}(v_{j})$ for each $u_{i}\in V_{1}$ and $v_{j}\in V_{2}$. Consequently, $g$ is an AVD-total coloring of $C(G)$. 
\end{proof}

(2). If $n$ is odd, then there exists an ICLS of order  $n+2=\Delta(C(G))$+ 3 by Fact \ref{Fact 4.3}(2,3).
Following the arguments of (1), we obtain an AVD-total coloring of $C(G)$ using $\Delta(C(G)) + 3$ colors. 
\end{proof}
\section{Central graphs of join of two graphs}

We begin with a short alternative proof of Theorem~\ref{Theorem 5.2}, a result of Panda et al.~\cite{PVK2020}, which will be useful in this section.
Our argument relies on the following lemma.

\begin{lem}[{\cite[Lemma 2.4]{YC1992}}]\label{Lemma 5.1}
If $G$ is a graph of order $n$ containing an independent set $S$ with $|S| \geq n - \Delta(G) - 1$, then $G$ satisfies the TCC.
\end{lem}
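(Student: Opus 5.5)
The plan is to reduce the total coloring of $G$ to an edge coloring of an auxiliary graph. Let $T=V(G)\setminus S$, so $|T|\le \Delta(G)+1$. All of $S$ can share one vertex color, since $S$ is independent, so the real work is coloring $T$ and the edges. Form $H$ from $G$ by adding a new vertex $w$ joined to every vertex of $T$.

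Then $\deg_H(w)=|T|\le\Delta(G)+1$, every vertex of $T$ has degree at most $\Delta(G)+1$, and vertices of $S$ keep degree at most $\Delta(G)$. Hence $\Delta(H)\le \Delta(G)+1$. By Vizing's Theorem (Fact~\ref{Fact 4.4}), $H$ has a proper edge coloring $\varphi$ with the $\Delta(G)+2$ colors $\{1,\dots,\Delta(G)+2\}$.

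Next, transfer $\varphi$ to $G$ in the standard way:
\begin{itemize}
\item keep $\varphi$ on $E(G)$;
\item give each $t\in T$ the color $\varphi(tw)$, which differs from all edge colors at $t$, and vertices of $T$ receive distinct colors because the edges at $w$ are distinct;
\item note that $w$ has degree at most $\Delta(G)+1<\Delta(G)+2$, so some color $c$ is missing at $w$, and no vertex of $T$ receives $c$;
\item give every vertex of $S$ the common color $c$.
\end{itemize}
This is proper on $S$-vertex versus $T$-vertex pairs and on $S$-vertex versus $S$-vertex pairs. The only possible failure is an edge $st$ with $s\in S$ and $\varphi(st)=c$. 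The edges of color $c$ incident to $S$ form a matching $M$ between $S$ and $T$, because $S$ is independent.

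The main obstacle is eliminating $M$. I would choose, among all Vizing colorings of $H$ and all colors $c$ missing at $w$, one minimizing $|M|$, and argue by Kempe changes. Take an edge $st\in M$. Since $\deg_H(s)\le\Delta(G)$, at least two colors are missing at $s$; pick such a color $\alpha$. Consider the $(c,\alpha)$-alternating path $P$ starting at $s$; its first edge is $st$. Swapping colors on $P$ removes $c$ from $s$. The danger is that the swap introduces $c$ at $w$, or at another vertex of $S$ already in $M$'s complement.

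To control this, I would choose $\alpha$ to be a color also missing at $w$ if possible. If $w$ misses both $c$ and $\alpha$, then $w$ has degree $0$ in the $(c,\alpha)$-subgraph, so $P$ avoids $w$. The end of $P$ can be another vertex of $S$ only when $P$ ends at an $S$-vertex missing one of the two colors. In that case I would compare parities, or instead start from the other end, to show that $|M|$ strictly decreases, contradicting minimality.

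If $w$ misses only the single color $c$, then $|T|=\Delta(G)+1$ and every $T$-vertex is adjacent to $w$. In that case I would exploit the counting slack from $|S|\ge n-\Delta(G)-1$: vertices of $T$ of full degree $\Delta(G)+1$ in $H$ have all their $G$-neighbors constrained. I expect this case analysis of the Kempe path's endpoints to be the delicate part of the proof.
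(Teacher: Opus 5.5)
The paper gives no proof of this lemma (it is quoted from Yap--Chew), so I am judging your argument on its own. Your reduction is correct and natural. Take $H=G+w$ with $w$ joined to $T=V(G)\setminus S$. It then suffices to find a proper edge coloring of $H$ with $\Delta(G)+2$ colors in which one color $c$ is missing at $w$ and at every vertex of $S$; your transfer step turns this into the total coloring.

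\textbf{The gap.} You never produce such a coloring, and the Kempe descent you sketch does not work. Suppose $\varphi(st)=c$, a color $\alpha$ is missing at both $s$ and $w$, $t$ has an edge $ty$ of color $\alpha$ with $y\in S$, and $y$ has no $c$-edge. Then the $(c,\alpha)$-component of $s$ is the path $s\,t\,y$. Swapping only moves the offending $c$-edge from $s$ to $y$, so $|M|$ does not decrease. Your proposed remedies do not help:
\begin{itemize}
\item Parity gives nothing: here the path has length $2$ with both ends in $S$, and in general $(c,\alpha)$-paths may use edges inside $T$.
\item ``Starting from the other end'' is vacuous, because $y$ already misses $c$.
\item The second color missing at $s$ can behave in exactly the same way.
\end{itemize}
In addition, the case where $c$ is the only color missing at $w$ (so $|T|=\Delta(G)+1$) is not argued at all. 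So minimality of $|M|$ is never contradicted.

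\textbf{A fix.} Reserve the extra color in advance rather than repairing an arbitrary Vizing coloring. Let $T_\Delta$ be the set of vertices of $T$ of degree $\Delta(G)$ in $G$, and let $M$ be a maximal matching of $G[T_\Delta]$. In $H-M$ every vertex has degree at most $\Delta(G)+1$. The vertices of degree $\Delta(G)+1$ lie among $w$ and the $M$-unsaturated vertices of $T_\Delta$. The latter are pairwise nonadjacent by maximality of $M$, and $w$ is adjacent to all of them. Hence the maximum-degree vertices of $H-M$ induce a subgraph of a star, which is a forest.

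By Fournier's theorem (a graph whose maximum-degree vertices induce a forest is class-I), $H-M$ has a proper edge coloring with colors $\{1,\dots,\Delta(G)+1\}$. If $\Delta(H-M)\le\Delta(G)$, Vizing's theorem gives this directly. Now give every edge of $M$ the color $\Delta(G)+2$. That color appears only on edges inside $T$, so it is missing at $w$ and at every vertex of $S$. Your transfer step then finishes the proof.
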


\begin{thm}[Panda--Verma--Keerti~\cite{PVK2020}]\label{Theorem 5.2}
The TCC holds for the central graph of any graph.
\end{thm}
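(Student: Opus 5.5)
We will derive Theorem~\ref{Theorem 5.2} from Lemma~\ref{Lemma 5.1}, applied to $C(G)$ with the set of subdivision vertices as the independent set. Let $G$ have $n$ vertices and $q=|E(G)|$ edges. Then $C(G)$ has order $N=n+q$. Write $V_1=V(G)$ and $V_2$ for the set of subdivision vertices, so $|V_2|=q$.

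The first step is to compute the degrees in $C(G)$. A vertex $u\in V_1$ is adjacent to the $\deg_G(u)$ subdivision vertices of its incident edges, and to the $n-1-\deg_G(u)$ vertices that are non-adjacent to it in $G$. Hence $\deg_{C(G)}(u)=n-1$. Every vertex of $V_2$ has degree $2$. So, whenever $n\ge 3$, we have $\Delta(C(G))=n-1$.

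The second step is to check the hypothesis of Lemma~\ref{Lemma 5.1}. The set $V_2$ is independent, since $C(G)$ adds no edges among subdivision vertices. We need
\[
|V_2|=q\ge N-\Delta(C(G))-1=(n+q)-(n-1)-1=q,
\]
which holds with equality. Lemma~\ref{Lemma 5.1} therefore gives $\chi''(C(G))\le \Delta(C(G))+2$.

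The only remaining obstacle is the degenerate small cases where $\Delta(C(G))$ is not $n-1$, namely $n\le 2$. For $n=1$, $C(G)=K_1$. For $n=2$, $C(G)$ is either $P_3$ (when $G=K_2$) or $K_2$ (when $G=\overline{K_2}$). In each of these cases the TCC is verified directly. For $n\ge3$ the degree computation is uniform, so no case distinction on the structure of $G$ is needed. This includes disconnected $G$ and the case $q=0$, where $C(G)=K_n$ and $V_2=\emptyset$ still satisfies the inequality trivially.
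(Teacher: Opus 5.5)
Your proposal is correct and is essentially the paper's argument: apply Lemma~\ref{Lemma 5.1} to the independent set of subdivision vertices, using $\Delta(C(G))=n-1$ for $n\ge 3$ so that $|V_2|=(n+|V_2|)-\Delta(C(G))-1$ holds with equality. The cases $n\le 2$ are then checked directly. You also write out those small cases explicitly ($K_1$, $P_3$, $K_2$), which the paper only calls ``direct verification.''
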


\begin{proof}[Alternative proof of Theorem~\ref{Theorem 5.2}]
Let $C(G)$ be the central graph of a graph $G=(V(G),E(G))$. Let $V_1 = V(G)$ and $V_2 = V(C(G)) \setminus V(G)$, where $|V_1| = n$ and $|V_2| = r$. 
If $n \geq 3$, then $|V(C(G))| = n + r$ and $\Delta(C(G)) = n - 1$. Moreover, $V_2$ is an independent set of size
$
r = (n + r) - \Delta(C(G)) - 1.
$
By Lemma~\ref{Lemma 5.1}, $C(G)$ satisfies the TCC.
If $n \leq 2$, the result follows by direct verification.
\end{proof}

\begin{fact}\label{Fact 5.3}
The following holds:

\begin{enumerate}
    \item {(Zhang et al. \cite[Theorem 2.2]{ZCYLW2005})} If $n\geq3$, then $\chi''_{a}(K_n)=n+1$ if $n$ is even and $\chi''_{a}(K_n)=n+2$ if $n$ is odd.
    
    \item {(Panda et al. \cite[Theorems 4.1, 4.2]{PVK2020})} If $G\in \{P_{n}, C_{n}\}$, then $\chi''_{a}(C(G))=n+1$ if $n$ is even and $\chi''_{a}(C(G))\leq n+2$ otherwise.

   \item {(Panda et al. \cite[Theorems 4.3, 4.4]{PVK2020})} $\chi''_{a}(C(K_{1,n}))=n+2$ and $\chi''_{a}(C(K_{n}))=n$ for any integer $n\geq 1$.
\end{enumerate}
\end{fact}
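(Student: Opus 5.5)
Fact~\ref{Fact 5.3} collects results quoted from the literature, so the plan is to take items (2) and (3) directly from Panda et al.~\cite{PVK2020} and to re-derive only item~(1) with the Latin-square technique of Section~4. Throughout, let $V(K_n)=\{u_1,\dots,u_n\}$ and $\Delta(K_n)=n-1$.

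\emph{Upper bounds in (1).} For $n$ even, take the ICLS $M=[m_{i,j}]$ of order $n+1$ from Fact~\ref{Fact 4.3}(2,3) and let $M'$ be its first $n$ rows and columns. Color $u_i$ by $m_{i,i}$ and $u_iu_j$ by $m_{i,j}$. This is a proper total coloring: row $i$ of $M'$ has distinct entries, and commutativity makes edge colors well defined. Moreover $C(u_i)$ is row $i$ of $M$ minus $m_{i,n+1}$. Since the last column of $M$ has pairwise distinct entries, the sets $C(u_i)$ are pairwise distinct. This gives $n+1$ colors.

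For $n$ odd, do the same with an ICLS of order $n+2$ and keep its first $n$ rows and columns. Then $C(u_i)$ is row $i$ minus $\{m_{i,n+1},m_{i,n+2}\}$. Two rows $i\neq r$ giving equal sets would force $\{m_{i,n+1},m_{i,n+2}\}=\{m_{r,n+1},m_{r,n+2}\}$. Column distinctness then forces $m_{i,n+1}=m_{r,n+2}$ and $m_{i,n+2}=m_{r,n+1}$. I would rule this out for the anti-circulant square of Fact~\ref{Fact 4.3}(3) by reducing the two congruences modulo $n+2$, where $i-r\not\equiv 0$ and $\pm1$ must be checked against the multiplier. This congruence check is the step I expect to need real care.

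\emph{Lower bounds in (1).} The bound $\chi''_a(K_n)\ge \chi''(K_n)\ge n$ holds, and equality is impossible. With $n$ colors every vertex sees all of them, so all the sets $C(u_i)$ coincide. This settles $n$ even.

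For $n$ odd we must also exclude $n+1$ colors. In that case each vertex misses exactly one color, and AVD forces the $n$ missing colors to be distinct. So exactly one color $c_0$ is missed by no vertex, and each other color is missed by exactly one vertex. A color missed by one vertex has a class of vertices plus a matching covering $n-1$ vertices. I would play this parity count against the standard fact that in a total coloring of $K_n$ with $n$ odd the vertex colors interact with the near-perfect matchings. The aim is a contradiction from double counting the vertex-colored elements (exactly $n$ of them) over the color classes. This parity argument is the main obstacle, and if it resists I would simply cite \cite[Theorem 2.2]{ZCYLW2005}.

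\emph{Items (2) and (3).} These are taken from \cite[Theorems 4.1--4.4]{PVK2020}. For the even cases of (2) I would reproduce them by the construction of Claims~\ref{Claim 4.10} and~\ref{Claim 4.11}. That construction colors $\overline{G}$ from an ICLS, list-edge-colors the bipartite part by Galvin's theorem (Fact~\ref{Fact 4.3}(1)), and then gives each subdivision vertex a free color. The only difference is that $G\in\{P_n,C_n\}$ is not necessarily regular, so the list sizes must be rechecked at the endpoints of the path.
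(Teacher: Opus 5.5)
The paper gives no argument for Fact~\ref{Fact 5.3}; all three items are simply quoted from \cite{ZCYLW2005} and \cite{PVK2020}. You do the same for (2) and (3), but for (1) you take a genuinely different, self-contained route through the Latin-square machinery of Section~4. In fact your even-$n$ coloring is exactly the $\overline{G}$-part of the construction in Claims~\ref{Claim 4.10} and~\ref{Claim 4.11}, applied to the edgeless graph $G$ on $n$ vertices (for which $C(G)=K_n$). The gain is that (1) follows from tools already in the paper rather than from an external result.

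Both steps you left open do close.

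\emph{Odd upper bound.} With $k=(n+3)/2$ we have $\gcd(k,n+2)=1$. So $m_{i,n+1}=m_{r,n+2}$ forces $i\equiv r+1$, and $m_{i,n+2}=m_{r,n+1}$ forces $i\equiv r-1 \pmod{n+2}$. Together these give $2\equiv 0\pmod{n+2}$, which is impossible since $n+2$ is odd.

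\emph{Odd lower bound.} The double count you have in mind is short. Let $V_c$ and $E_c$ be the vertices and edges of color $c$. Then $|V_c|\le 1$ because $K_n$ is complete, and $c$ lies in exactly $|V_c|+2|E_c|$ of the sets $C(u)$. The color $c_0$ missed by no vertex lies in $n$ sets, an odd number, so $|V_{c_0}|=1$. Every other color lies in $n-1$ sets, an even number, so $|V_c|=0$. Hence $\sum_c|V_c|=1$, whereas all $n\ge 3$ vertices must be colored. So the fallback to \cite{ZCYLW2005} is unnecessary.

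For (2) and (3), citing is what the paper does, but do not import (3) uncritically. Since $\overline{K_n}$ is edgeless, $C(K_n)=S(K_n)$, which is $P_3$ for $n=2$ and $C_6$ for $n=3$. By Facts~\ref{Fact 2.4} and~\ref{Fact 2.3}, $\chi''_a(C(K_n))$ is then $3$ and $4$, not $n$, so the ``for any $n\ge 1$'' is false. For $n\ge 4$ the value $n$ is exactly Theorem~\ref{Theorem 2.6}, so that half needs no citation either.

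In (2), the cycle cases with $n\ge 5$ are just Theorem~\ref{Theorem 4.9}. For paths your concern is justified. At an end vertex $u$ of $P_n$ the list $L(uv_j)$ has size $\deg_G(u)=1<2=\Delta(B)$, so Galvin's theorem does not apply verbatim. However, $B=S(P_n)$ is itself a path and can be list-colored directly.
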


\begin{prop}\label{Proposition 5.4}
Let $G_1$ and $G_2$ be two graphs of order at least $3$. The following holds:
\begin{enumerate}
    \item If $C(G_{1})$ and $C(G_{2})$ satisfy the AVD-TCC, then $C(G_{1} \vee G_{2})$ satisfies the AVD-TCC.
    \item The graph $C(K_{m,n})$ satisfies the AVD-TCC for any $m,n\geq 3$.
    \item If $G_{1}\in\{P_{n},C_{n},K_{n}\}$ and
    $G_{2}\in\{P_{m},C_{m},K_{m}\}$, then
    $C(G_{1}\vee G_{2})$ satisfies the AVD-TCC for any integers $m,n\geq3$.
    \item If $G_{1}$ and $G_{2}$ have the same order, then $C(G_{1} \vee G_{2})$ satisfies the AVD-TCC.
\end{enumerate}
\end{prop}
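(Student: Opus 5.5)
The plan rests on the structure of $C(G_1\vee G_2)$. Write $G=G_1\vee G_2$, with $|V(G_1)|=n_1\ge 3$ and $|V(G_2)|=n_2\ge 3$. Every original vertex $x$ of a central graph $C(H)$ of an order-$k$ graph $H$ has degree $deg_H(x)+(k-1-deg_H(x))=k-1$, while every subdivision vertex has degree $2$. Hence $\Delta(C(G))=n_1+n_2-1$, and an AVD-TCC coloring may use $n_1+n_2+2$ colors.

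Since $\overline{G}=\overline{G_1}\cup\overline{G_2}$ is a disjoint union, $C(G)$ consists of the following pieces:
\begin{itemize}
\item a copy of $C(G_1)$;
\item a copy of $C(G_2)$;
\item for each pair $u_i\in V(G_1)$, $v_j\in V(G_2)$, a path $u_i\,w_{ij}\,v_j$ through a new subdivision vertex $w_{ij}$.
\end{itemize}
There are no other edges; in particular, no $u_i$ is adjacent to any $v_j$ in $C(G)$. The neighbourhood of $u_i$ inside $C(G_1)$ is unchanged, and $u_i$ additionally sees $w_{i1},\dots,w_{in_2}$.

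For (1), I split the palette $\{1,\dots,n_1+n_2+2\}$ into $P_1=\{1,\dots,n_1+2\}$ and $P_2=\{n_1+1,\dots,n_1+n_2+2\}$, which overlap in two colors. Let $Q_1=P_2\setminus P_1$, of size $n_2$, and $Q_2=P_1\setminus P_2$, of size $n_1$.
\begin{itemize}
\item Color $C(G_1)$ by a given AVD-total coloring with colors in $P_1$, and $C(G_2)$ by one with colors in $P_2$; this is possible since $\Delta(C(G_t))+3=n_t+2$.
\item Give $u_iw_{ij}$ the $j$-th color of $Q_1$, and give $w_{ij}v_j$ the $i$-th color of $Q_2$.
\item Color $w_{ij}$ with any color avoiding the colors of $u_i$, $v_j$ and its two edges; at least $n_1+n_2-2\ge 4$ choices remain.
\end{itemize}
The coloring is proper: the colors at $u_i$ from $C(G_1)$ lie in $P_1$, the cross colors at $u_i$ lie in $Q_1$ and are pairwise distinct, and symmetrically at $v_j$.

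For the AVD property:
\begin{itemize}
\item Two adjacent original vertices $u_i,u_{i'}$ satisfy $C(u_i)=C_{C(G_1)}(u_i)\cup Q_1$ with $C_{C(G_1)}(u_i)\subseteq P_1$ and $P_1\cap Q_1=\emptyset$. These sets therefore differ because the colorings of $C(G_1)$ are AVD. The same argument works inside $C(G_2)$.
\item An original vertex and an adjacent subdivision vertex have color sets of sizes $n_1+n_2\ge 6$ and $3$, so they differ.
\end{itemize}

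For (2), note $K_{m,n}=\overline{K_m}\vee\overline{K_n}$ and $C(\overline{K_m})=K_m$, which satisfies the AVD-TCC by Fact~\ref{Fact 5.3}(1); now apply (1). For (3), Fact~\ref{Fact 5.3}(2),(3) show $C(P_n)$, $C(C_n)$ and $C(K_n)$ satisfy the AVD-TCC, and (1) applies again.

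For (4), with $n_1=n_2=n$, the hypothesis of (1) is unavailable, so the cross edges themselves must do the distinguishing. I would proceed as follows.
\begin{itemize}
\item By Theorem~\ref{Theorem 5.2}, take proper total colorings of $C(G_1)$ and $C(G_2)$ with $n+1$ colors each, from disjoint palettes $P_1,P_2$; the total is $2n+2=\Delta(C(G))+3$.
\item Take a Latin square $M$ of order $n+1$ as in Fact~\ref{Fact 4.3}(4), with symbols identified with $P_2$. Color $u_iw_{ij}$ by $m_{i,j}$ for $1\le j\le n$. Then $u_i$ sees all of $P_2$ except $m_{i,n+1}$, and these missing colors are distinct for distinct $i$.
\item Similarly, using a second copy of $M$ with symbols identified with $P_1$, color $w_{ij}v_j$ by $m_{i,j}$. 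Then $v_j$ misses exactly $m_{n+1,j}$ from $P_1$, and these are distinct over $j$.
\item Color each $w_{ij}$ greedily, as in (1).
\end{itemize}
Now $C(u_i)\cap P_2=P_2\setminus\{m_{i,n+1}\}$ separates any two vertices of $G_1$, and likewise within $G_2$. Original versus subdivision vertices are again separated by cardinality.

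The main thing to check carefully is properness at each $w_{ij}$ and at the original vertices in (4). The two edges at $w_{ij}$ come from disjoint palettes, and the cross colors at $u_i$ (in $P_2$) never clash with its $C(G_1)$-colors (in $P_1$). Once that is verified, the remaining steps are routine.
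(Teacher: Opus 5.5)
Your proposal is correct and follows the paper's proof essentially step for step. In (1) you use the same overlapping-palette split, with the cross edges drawn from the two exclusive parts. Parts (2)--(3) go by the same reductions. In (4) you use the same idea: disjoint $(n+1)$-palettes from Theorem~\ref{Theorem 5.2}, with cross edges colored from the opposite palette so that each original vertex misses a color unique to it. Your Latin-square bookkeeping in (4) is just a repackaging of the paper's explicit formula. It also gets the $v$-side right, whereas the paper's displayed rule $f(w_{v_qu_i}v_q)=q$ for $q\neq i$ should read $i$; as written it repeats color $q$ at $v_q$.
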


\begin{proof}
(1). Let $V=\{v_{i}: 1 \leq i \leq n\}$ and $U=\{u_{j} : 1 \leq j \leq m\}$ be the set of vertices of $G_{1}$ and $G_{2}$ respectively. 
Let $G_{n}=C(G_{1})$, $G_{m}=C(G_{2})$, and $G=C(G_{1} \vee G_{2})$. 
Without loss of generality, we assume that $m\geq n$. We note that $\Delta(G)=m+n-1$, $\Delta(G_{n})=n-1$, and $\Delta(G_{m})=m-1$. 
Define $w_{v_{p}u_{q}}$ as the vertex that subdivides the edge $v_{p}u_{q}$ for some $1\leq p\leq n$ and $1\leq q\leq m$. 
Let $C=\{1,...,m+n+2\}$ be a set of colors.
Let $f_{m}: V(G_{m})\cup E(G_{m})\rightarrow \{1,...,m+2\}$ and $f_{n}: V(G_{n})\cup E(G_{n})\rightarrow \{m+1,...,m+n+2\}$ be the AVD-total colorings of $G_{m}$ and $G_{n}$ respectively. This is possible since $G_{m}$ and $G_{n}$ satisfy the AVD-TCC.
We define the coloring $f: V(G)\cup E(G) \rightarrow \{1,...,m+n+2\}$ as follows:
\[
f(x)=
\begin{cases}
f_n(x), & \text{if } x\in V(G_n)\cup E(G_n),\\
f_m(x), & \text{if } x\in V(G_m)\cup E(G_m),\\
m+2+q, & \text{if } x=u_iw_{v_qu_i},\\
i, & \text{if } x=w_{v_qu_i}v_q,\\
k, & \text{if } x=w_{v_i u_j},\;
k\in C\setminus\{f(w_{v_i u_j}u_j),f(v_iw_{v_i u_j}),f(v_i),f(u_j)\}
\end{cases}
\]
for each $1\leq i\leq m$ and $1\leq q\leq n$.\footnote{Since $m,n \ge 3$, we have $|C| = m+n+2 > 4$. So, the set 
$C \setminus \{f(w_{v_i u_j}u_j), f(v_i w_{v_i u_j}), f(v_i), f(u_j)\}$ 
is nonempty.} By construction, $f$ is a proper total coloring of $G$.

\begin{figure}[h]
\centering
\begin{tikzpicture}[scale=0.9]

\node at (3,3) {$W$};

\draw[rounded corners] (-1,2.7) rectangle (1,-0.7);
\node at (0,3) {$G_{n}=C(G_1)$};

\draw[rounded corners] (5,2.7) rectangle (7,-0.7);
\node at (6,3) {$G_{m}=C(G_2)$};

\fill (0,2.5) circle (1.2pt) node[left=4pt] {$v_1$};
\fill (0,1.3) circle (1.2pt) node[left=4pt] {$v_2$};
\node at (0,0.1) {$\vdots$};
\fill (0,-0.5) circle (1.2pt) node[left=4pt] {$v_n$};

\fill (6,2.5) circle (1.2pt) node[right=4pt] {$u_1$};
\fill (6,1.3) circle (1.2pt) node[right=4pt] {$u_2$};
\node at (6,0.1) {$\vdots$};
\fill (6,-0.5) circle (1.2pt) node[right=4pt] {$u_m$};

\draw (0,2.5) -- (0,1.3);
\draw[bend left=-70] (0,2.5) to (0,-0.5);

\draw (6,2.5) -- (6,1.3);
\draw[bend right=-70] (6,2.5) to (6,-0.5);

\fill (3,2.7) circle (1.2pt) node[below left=0pt] {$w_{v_{1}u_{1}}$};
\fill (3,2.0) circle (1.2pt) node[below left=0pt] {$w_{v_{1}u_{2}}$};
\fill (3,1.2) circle (1.2pt) node[below left=0pt] {$w_{v_{2}u_{1}}$};
\fill (3,0.5) circle (1.2pt) node[below left=0pt] {$w_{v_{2}u_{2}}$};
\node at (3,0.1) {$\vdots$};
\fill (3,-0.5) circle (1.2pt) node[below left=0pt] {$w_{v_{n}u_{m}}$};


\draw (0,2.5) -- (3,2.7);
\draw (3,2.7) -- (6,2.5);

\draw (0,2.5) -- (3,2.0);
\draw (3,2.0) -- (6,1.3);

\draw (0,1.3) -- (3,1.2);
\draw (3,1.2) -- (6,2.5);

\draw (0,1.3) -- (3,0.5);
\draw (3,0.5) -- (6,1.3);

\draw (0,-0.5) -- (3,-0.5);
\draw (3,-0.5) -- (6,-0.5);

\end{tikzpicture}
\vspace{-2mm}
\caption{\em A representation of $C(G_1\vee G_2)$.}
\label{Figure 6}
\end{figure}


\begin{claim}\label{Claim 5.5}
The coloring $f$ is an AVD-total coloring of $G$.
\end{claim}
 
\begin{proof}
We show that $C_G(x)\neq C_G(y)$ for every $xy\in E(G)$. For each $u_i\in U$ and $v_j\in V$, define
\[
U(u_i)=\{f(u_iw_{v_qu_i}) : 1\le q\le n\},\qquad
V(v_j)=\{f(w_{v_ju_q}v_j) : 1\le q\le m\}.
\]
Then $U(u_k)=U(u_l)$ for all distinct $u_k,u_l\in U$, and $V(v_k)=V(v_l)$ for all distinct $v_k,v_l\in V$.
Since $f_{n}$ and $f_{m}$ are AVD-total colorings of $G_{m}$ and $G_{n}$ respectively, we have $C_{G_{n}}(v_{i})\neq C_{G_{n}}(v_{j})$ and $C_{G_{m}}(u_{i})\neq C_{G_{m}}(u_{j})$ if $v_{i} v_{j}\in E(G_{n})$ and $u_{i} u_{j}\in E(G_{m})$. Consequently, $C_{G}(v_{i})=C_{G_{n}}(v_{i}) \cup V(v_{i})\neq C_{G_{n}}(v_{j})\cup V(v_{j})=C_{G}(v_{j})$. Similarly, $C_{G}(u_{i})\neq C_{G}(u_{j})$.

Let $W=\{w_{v_pu_q}: 1\leq p\leq n,\; 1\leq q\leq m\}$ be the set of subdivision vertices corresponding to the edges joining $G_1$ and $G_2$. All vertices of $V$ and $U$ have degree $m+n-1> 2$, and all vertices of $W$ have degree 2. So, $C_{G}(w_{v_{i}u_{j}})\not\in \{C_{G}(v_{i}),C_{G}(u_{j})\}$ for any $1\leq j\leq m$ and $1\leq i \leq n$.
\end{proof}

(2). Let $G_{1}$ and $G_{2}$ be independent sets of sizes $m$ and $n$, respectively. Then $C(G_{1})\cong K_{m}$, $C(G_{2})\cong K_{n}$ and $G_{1} \vee G_{2}\cong K_{m,n}$. The rest follows from (1) and Fact \ref{Fact 5.3}(1).

(3). This follows from (1) and Fact \ref{Fact 5.3}(2,3).

(4). Let $m=n$ and assume $V, U, G_{m}, G_{n},$ and $w_{v_{p}u_{q}}$ as in the proof of (1). Let $C=\{1,...,2n+2\}$. Let $f_{m}: V(G_{m})\cup E(G_{m})\rightarrow \{1,...,n+1\}$ and $f_{n}: V(G_{n})\cup E(G_{n})\rightarrow \{n+2,...,2n+2\}$ be the proper total colorings of $G_{m}$ and $G_{n}$ respectively. This follows from Theorem \ref{Theorem 5.2}. Define $f: V(G)\cup E(G) \rightarrow \{1,...,2n+2\}$ as follows:
\begin{align*}
f(x)=
\begin{cases}
f_n(x), & \text{if } x\in V(G_n)\cup E(G_n),\\
f_m(x), & \text{if } x\in V(G_m)\cup E(G_m),\\
n+1+q, & \text{if } x=u_iw_{v_qu_i},\ q\neq i,\\
2n+2, & \text{if } x=u_iw_{v_iu_i},\\
q, & \text{if } x=w_{v_qu_i}v_q,\ q\neq i,\\
n+1, & \text{if } x=w_{v_iu_i}v_i,\\
k, & \text{if } x=w_{v_iu_j},\;
k\in C\setminus
\{f(w_{v_iu_j}u_j),f(v_iw_{v_iu_j}),f(v_i),f(u_j)\}
\end{cases}
\end{align*}
where $1\leq i,q\leq n$. 
We note that the color $i$ is missing in $C_{G}(v_{i})$ and the color $n+1+i$ is missing in $C_{G}(u_{i})$ for each $1\leq i\leq n$. So, $C_{G}(v_{i})\neq C_{G}(v_{j})$ and $C_{G}(u_{i})\neq C_{G}(u_{j})$ if $v_{i} v_{j}\in E(G_{n})$ and $u_{i} u_{j}\in E(G_{m})$. Thus, in view of the proof of Claim \ref{Claim 5.5}, $f$ is an AVD-total coloring.
\end{proof}




\end{document}